\newcommand{\newnumbered}[2]{\newtheorem{#1}[theorem]{#2}}
\newcommand{\newunnumbered}[2]{\newtheorem{#1}[theorem]{#2}}
\newcommand{\Title}[2]{\title{#1}\newcommand{\Acknowledgements}{\section*{Acknowledgements} #2}}
\newcommand{\Author}[2][]{\author{#2}}
\newcommand{\Comma}{\and}
\newcommand{\Und}{\and}
\newcommand{\br}{, }
\newcommand{\fs}{. }
\newcommand{\thanksone}[3][]{#1\thanks{#3\email{\tt #2}}}
\newcommand{\thankstwo}[3][]{#1\thanks{#3\email{\tt #2}}}
\newcommand{\thanksthree}[3][]{#1\thanks{#3\email{\tt #2}}}
\newcommand{\email}[1]{#1}
\newcommand{\classno}[2][2000]{}
\newcommand{\mktitle}{\maketitle}
\newcommand{\mkabstitle}{}
\newtheorem{theorem}{Theorem}[section] 
\newtheorem{maintheorem}{Theorem}
\newtheorem{lemma}[theorem]{Lemma}     
\newtheorem{corollary}[theorem]{Corollary}
\newtheorem{proposition}[theorem]{Proposition}
\numberwithin{equation}{section}
\newcommand{\noop}[1]{}
\newcommand{\fiber}[1]{\mathsf{F}(#1)}
\newcommand{\rel}[1]{\mathsf{S}(#1)}
\newcommand{\kk}[1]{\mathcal{#1}}
\newcommand{\wl}[1]{\mathsf{WL}(#1)}
\newcommand{\qo}[1]{#1/e}
\newcommand{\eqv}{e_{\kk{X}}}
\newcommand{\df}[1]{\textbf{#1}}
\def\thrm{\begin{theorem}}
\def\thrml#1{\begin{theorem}\label{#1}}
\def\ethrm{\end{theorem}}
\def\rmrk{\begin{remark}}
\def\rmrkl#1{\begin{remark}\label{#1}}
\def\ermrk{\end{remark}}
\def\dfntn{\begin{definition}}
\def\dfntnl#1{\begin{definition}\label{#1}}
\def\edfntn{\end{definition}}
\def\nmrt{\begin{enumerate}}
\def\enmrt{\end{enumerate}}
\def\qtn{\begin{equation}}
\def\qtnl#1{\begin{equation}\label{#1}}
\def\eqtn{\end{equation}}
\def\lmm{\begin{lemma}}
\def\lmml#1{\begin{lemma}\label{#1}}
\def\elmm{\end{lemma}}
\def\crllr{\begin{corollary}}
\def\crllrl#1{\begin{corollary}\label{#1}}
\def\ecrllr{\end{corollary}}
\def\css{\begin{cases}}
\def\ecss{\end{cases}}
\def\cX{{\cal X}}
\def\cY{{\cal Y}}
\def\fK{{\frak K}}
\def\sF{\mathsf{F}}
\def\sS{\mathsf{S}}
\def\sT{\mathsf{T}}
\def\bR{\boldsymbol{r}}
\DeclareMathOperator{\aut}{Aut}
\DeclareMathOperator{\Aiso}{Iso_{alg}}
\DeclareMathOperator{\sym}{Sym}
\DeclareMathOperator{\wL}{\mathsf{WL}}
\def\ov{\overline}
\begin{document}

\Title{The Weisfeiler-Leman dimension of distance-hereditary graphs}{%
Alexander Gavrilyuk is supported by
Basic Science Research Program through the National Research Foundation of Korea (NRF) funded
by the Ministry of Education (grant number NRF-2018R1D1A1B07047427).
Roman Nedela is supported by the Czech Science Foundation, grant GACR 20-15576S
and by the Slovak Research and Development Agency, Grant No. APVV-15-0220.
}

\Author[Alexander L. Gavrilyuk, Roman Nedela, Ilia Ponomarenko]{%
\thanksone[Alexander L. Gavrilyuk]{alexander.gavriliouk@gmail.com}{%
Pusan National University\br
Busan, Republic of Korea\fs
}
\Comma
\thankstwo[Roman Nedela]{nedela@ntis.zcu.cz}{%
Department of Mathematics\br
University of West Bohemia\br
Pilsen, Czech Republic\fs
}
\Und
\thanksthree[Ilia Ponomarenko]{inp@pdmi.ras.ru}{%
St. Petersburg Department of the Steklov Mathematical Institute\br
St. Petersburg, Russia;
School of Mathematicsand Statistics of Central China Normal University\br
Wuhan, China\fs
}}

\classno{05E30 (primary), 05B15 (secondary)}

\date{\today}

\mktitle

\begin{abstract}
A graph is said to be distance-hereditary if the distance function in every
connected induced subgraph is the same as in the graph itself. We prove
that the ordinary Weisfeiler-Leman algorithm correctly tests the isomorphism
of any two graphs if one of them is distance-hereditary; more precisely,
the Weisfeiler-Leman dimension of the class of finite distance-hereditary
graphs is equal to~$2$.  The previously best known upper bound for the dimension was~$7$.
\end{abstract}

\mkabstitle

\section{Introduction}
Over the past few decades, the Weisfeiler-Lehman algorithm (WL) has become
one of the most studied tools for testing isomorphism of finite graphs~\cite{Conf2018}.
This algorithm colors the arcs of the graphs in question and
then compares the numerical invariants of the obtained colorings; the graphs
are declared to be isomorphic if the corresponding invariants are equal,
and nonisomorphic otherwise. In the general case, the output  is not always true,
for example, if the input graphs are nonisomorphic strongly regular graphs with the same parameters.

Stronger isomorphism invariants are obtained if, instead of coloring the arcs,
one considers coloring the $d$-tuples of vertices, $d>2$;
the corresponding generalization is called the $d$-dimensional Weisfeiler-Lehman algorithm or the $d$-dim WL for short.
It was introduced by Babai and played an essential role in his recent
quasipolynomial algorithm testing isomorphism of arbitrary graphs~\cite{B2015}.
For $d=1$ and $d=2$, the $d$-dim WL coincides with the naive refinement
and ordinary WL, respectively.

It can be shown that, given a graph $X$, there exists a positive integer $d_X$ such that
if $d\geq d_X$, then the $d$-dim WL correctly identifies $X$ (i.e., tests isomorphism between $X$ and any other graph);
the smallest such $d_X$ is called the WL-dimension of the graph $X$.
(For the exact definitions, we refer the reader to Section \ref{sect:pre}.)
An equivalent definition of the WL-dimension can also be done
in terms of the first order logic with counting quantifiers and bounded number of variables;
the interested reader is referred to the monograph~\cite{Grohe2017}.

A rather general problem can be formulated as follows: determine the maximum WL-dimension
of a graph belonging to a given class~$\fK$; this number is called the WL-dimension
of $\fK$ (cf.~\cite[Definition~18.4.3]{Grohe2017}). Although the WL-dimension
of the class of all graphs cannot be bounded by a constant \cite{CFI1992},
for many natural graph classes the situation is different. Among these classes are
the interval graphs~\cite{EPT2000}, the planar graphs~\cite{KPS2017}, and many others
(see, e.g., \cite{GK2019} and references therein).

In a recent paper  \cite{GN2019}, it was proved that the WL-dimension of the class
of graphs of rank width at most~$r$ is less than or equal to~$3r+4$. From~\cite[Proposition~7.3]{Oum2005},
it follows that if  $r=1$, then the latter class coincides with the well-known class
of  distance-hereditary graphs introduced in \cite{H1977}; a graph is said to be
distance-hereditary if the distance function in every connected induced subgraph is
the same as in the graph itself.  Thus, according to \cite{GN2019}, the WL-dimension
of the class of distance-hereditary graphs is at most~$7$. The main result
of the present paper shows that this upper bound is not tight. More precisely,
the following theorem holds.

\begin{maintheorem}\label{theo:main}
  The WL-dimension of the class of finite distance-hereditary graphs is equal to~$2$.
\end{maintheorem}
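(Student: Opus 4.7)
The lower bound WL-dim$\,\ge 2$ is immediate: $K_{3,3}$ is a complete bipartite graph, hence distance-hereditary, and is $3$-regular on~$6$ vertices, so 1-dim WL refines the trivial monochromatic partition to itself and cannot separate $K_{3,3}$ from the triangular prism $K_3\,\square\,K_2$. Since $K_{3,3}$ is distance-hereditary, this already shows that the WL-dimension of the class is at least~$2$, so it suffices to prove the matching upper bound.

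My plan for the upper bound is to induct on $|V(X)|$, using the classical characterization of DH graphs due to Bandelt and Mulder: every distance-hereditary graph on at least two vertices contains either a pair of twins ($u,v$ with $N(u)=N(v)$ or $N[u]=N[v]$) or a pendant vertex, and deleting one vertex of such a pair yields a smaller DH graph. The key point is to show that the coherent configuration $\kk{W}=\wl{X}$ produced by 2-dim WL on a DH graph~$X$ already records this pruning structure, and that the inductive reduction can be carried out from $\kk{W}$ alone.

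Concretely the proof breaks into three steps. \emph{Step 1:} verify that 2-dim WL sees twins and pendants, i.e.\ the relations ``$u,v$ are true twins,'' ``$u,v$ are false twins,'' and ``$v$ is a pendant attached to $u$'' are unions of basic relations of $\kk{W}$, since they are invariant under 2-dim WL colour refinement of ordered pairs. \emph{Step 2:} describe a canonical reduction $X\mapsto X'$ obtained by collapsing a chosen $\kk{W}$-class of twins to a single representative (or by pruning a canonical class of pendants), check that $X'$ is again distance-hereditary, and show that $\wl{X'}$ is obtained from $\kk{W}$ by the analogous reduction of coherent configurations. \emph{Step 3:} start from an arbitrary graph $Y$ with the same 2-dim WL output as $X$; perform the \emph{same} reduction on~$Y$ (legitimate because the class being collapsed is defined purely in coherent-configuration terms), apply the induction hypothesis to the reduced graphs to obtain $Y'\cong X'$, and lift this isomorphism to $Y\cong X$.

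The main obstacle will be Step~3: showing that $\kk{W}$ is \emph{separable} in Ponomarenko's sense, in that any algebraic isomorphism $\wl{Y}\to\kk{W}$ is induced by a genuine graph isomorphism $Y\to X$. The delicate ingredients are (i) making the choice of reducing class canonical enough to transport automatically from $X$ to $Y$; (ii) controlling the interaction between pendants and twin classes after contraction, since a single reduction step may create new twins or pendants whose meaning in the coherent configuration has to be tracked; and (iii) verifying that the lift $Y'\cong X'\Rightarrow Y\cong X$ yields a true graph isomorphism, which requires that $\kk{W}$ encode not just the existence of each twin class but the full bipartite adjacency pattern between it and the rest of the graph.
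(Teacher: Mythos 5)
Your overall strategy---lower bound from a regular distance-hereditary graph, upper bound by induction on $|V(X)|$ via the Bandelt--Mulder characterization, reducing by pendants or twins and transferring separability---is exactly the route the paper takes, and your lower-bound argument ($K_{3,3}$ versus the triangular prism) is a perfectly good concrete substitute for the paper's appeal to the classification of regular graphs of WL-dimension~$1$. However, as you yourself signal, Steps 2 and 3 carry all the content, and two structural points in your plan would need repair before the induction closes. First, the induction hypothesis cannot be ``$\wl{Y}$ is separable for every smaller distance-hereditary graph $Y$'': after one reduction step the object you must handle is $\wl{X'}_{\pi'}$ for a partition $\pi'$ inherited from the reduction, which is in general strictly finer than $\sF(\wl{X'})$. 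The paper therefore proves the stronger statement that $\wl{X}_\pi$ is separable for \emph{every} correct partition $\pi$, and verifies in each reduction step (Propositions~\ref{prop:matching} and~\ref{prop:quot}) that the inherited partition is again correct. Second, ``collapse a $\kk{W}$-class of twins to a single representative'' only covers twins lying in the same fiber of $\kk{W}$; two vertices can be twins in $X$ while lying in different fibers, in which case they are not twins of the coherent configuration and no parabolic collapses them. The paper handles this by showing (Lemma~\ref{lm:indstep}) that in that situation the basis relation joining them is a \emph{matching} of valency one on each side, and treats it by vertex removal rather than by quotienting; without this case split your Step~2 has no reduction to perform on such graphs.

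Beyond that, the identities you propose to ``check'' in Step 2---that $\wl{X\setminus\Delta}$ (resp.\ $\wl{X/e}$) is obtained from $\wl{X}_\pi$ by restriction (resp.\ by the quotient modulo the twin parabolic)---are the technical core of the argument and occupy most of the paper: one inequality in each is immediate from minimality of the coherent closure, but the reverse one requires building an explicit auxiliary rainbow from the basis relations of the smaller configuration together with the matching (or the parabolic classes) and verifying all intersection numbers case by case. Likewise the separability transfer in your Step~3 is not automatic: for the quotient case the paper proves it by choosing transversals of the twin classes and extending a bijection that induces the quotient algebraic isomorphism class by class, using crucially that the parabolic in question is the \emph{twin} parabolic and is preserved by every algebraic isomorphism (Corollary~\ref{coro:twinparabolicISO}). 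So the proposal is the right skeleton, but essentially all of the proof remains to be written.
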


Note that when the dimension of a graph class $\fK$ is bounded from above by a constant $d$,
the graph isomorphism problem restricted to $\fK$ is solved in polynomial time by the $d$-dim WL.
Thus, Theorem \ref{theo:main} shows that this conclusion holds with $d=2$ if $\fK$ is the class
of distance-hereditary graphs. An efficient algorithm for this particular graph isomorphism problem
was constructed in~\cite{NUU}; see also \cite{DER}.

%
%

Modulo a characterization of graphs that have WL-dimension~$1$ (see \cite{AKRV2017}),
the proof of Theorem~\ref{theo:main} reduces to verify that the WL-dimension of a distance-hereditary graph
is at most~$2$. To this aim, we use  theory of coherent configurations, see Section \ref{sect:pre}.
Namely, given a graph~$X$ the output coloring
of the ordinary WL defines a coherent configuration~$\cX$, which preserves all information
needed to test isomorphism between $X$ and any other graph. Moreover, the invariants
of the coloring form a full invariant of~$\cX$ with respect to algebraic isomorphisms.
As was proved in \cite{FKV2019}, the WL-dimension of the graph~$X$ is at most $2$
if and only if the coherent configuration~$\cX$ is separable, i.e., every algebraic isomorphism
of~$\cX$ is induced by a suitable combinatorial isomorphism. Thus, we only need to check that
$\cX$ is separable if $X$ is a distance-hereditary graph.
The proof of the latter is based on an inductive characterization
of the distance-hereditary graphs, see~\cite[Theorem~1]{BM1986}.

To make the paper self-contained, we introduce relevant concepts and statements
of the theory of coherent configurations in Section~\ref{sect:pre}.
A translation of graph theoretical operations (used in  the inductive characterization of
the distance-hereditary graphs) to the language of coherent configurations occupies Sections \ref{sect:twins} and \ref{sect:main}.
The proof of Theorem~\ref{theo:main} is given in Section \ref{sect:proof}.
\smallskip

\section{Rainbows, Coherent configurations, Graphs}\label{sect:pre}
In our presentation of coherent configuration, we mainly follow the monograph~\cite{CP2019},
where all the details can be found.

\subsection{Notation}
Throughout the paper, $\Omega$ denotes a finite set. For $\Delta\subseteq \Omega$, the diagonal of the Cartesian product $\Delta\times\Delta$ is denoted by~$1_\Delta$.

For a binary relation $r\subseteq\Omega\times\Omega$, we set $\Omega_{-}(r)=\{\alpha\in\Omega\!:\ \alpha r\ne\varnothing\}$,  $\Omega_{+}(r)=\Omega_{-}(r^*)$, $r^*=\{(\beta,\alpha)\!:\ (\alpha,\beta)\in r\}$, $\alpha r=\{\beta\in\Omega\!:\ (\alpha,\beta)\in r\}$ for all $\alpha\in\Omega$, and $r^f=\{(\alpha^f,\beta^f)\!:\ (\alpha,\beta)\in r\}$ for any bijection $f$ from $\Omega$ to another set. The product of the relations $r,s\subseteq\Omega\times\Omega$, is denoted by $r\cdot s=\{(\alpha,\beta)\!:\ (\alpha,\gamma)\in r,\ (\gamma,\beta)\in s$ for some $\gamma\in\Omega\}$.

For a set $S$ of relations on $\Omega$, we denote by $S^\cup$ the set of all unions of the elements of $S$,
put $S^*=\{s^*\!:\ s\in S\}$, and $S^f=\{s^f\!:  s\in S\}$ for any bijection $f$ from $\Omega$ to another set.
For  $r\subseteq\Omega\times\Omega$, we define $r\cdot S=\{r\cdot s\!: s\in S\}$,
$S\cdot r=\{s\cdot r\!: s\in S\}$, and $\alpha S=\cup_{s\in S}\alpha s$, $\alpha\in \Omega$.

For a class $\Delta$ of a partition $\pi$, we set $\pi\setminus \Delta=\pi\setminus\{\Delta\}$.

\subsection{Rainbows}
Let $\Omega$ be a finite set and  $\sS$ a partition of $\Omega\times\Omega$. A pair $\cX=(\Omega,\sS)$ is called
a \df{rainbow} on $\Omega$ if
\begin{equation}\label{eq-rainbow}
1_\Omega\in \sS^\cup, \textrm{~and~} \sS^*=\sS.
\end{equation}
The elements of the sets $\Omega$, $\sS=:\sS(\cX)$,
and $\sS^{\cup}$ are called the \df{points},  \df{basis relations},  and  \df{relations} of~$\cX$, respectively.
A unique basic relation containing a pair $(\alpha,\beta)\in\Omega\times\Omega$ is denoted by $\bR_\cX(\alpha,\beta)$;
we omit the subscript~$\cX$ wherever it does not lead to misunderstanding.

A set $\Delta\subseteq\Omega$ is called a  \df{fiber} of a rainbow~$\cX$ if $1_\Delta\in \sS$;
the set of all fibers is denoted by $\sF:=\sF(\cX)$. The point set~$\Omega$ is the disjoint union of fibers.
If $\Delta$ is a union of fibers, then the pair
\[
\cX_\Delta=(\Delta,\sS_\Delta)
\]
is a rainbow, where $\sS_\Delta$ consists of all $s_\Delta=s\cap(\Delta\times\Delta)$, $s\in\sS$.
In what follows, we set $\cX\setminus\Delta=\cX_{\Omega\setminus\Delta}$.

Let $\cX=(\Omega,\sS)$ and  $\cX'=(\Omega',\sS')$ be rainbows. A bijection  $f\!:\Omega\to\Omega'$
is called a \df{combinatorial isomorphism} (or simply isomorphism) from $\cX$ to $\cX'$ if $\sS^f=\sS'$.
When $\cX=\cX'$, the set of all these isomorphisms form a permutation group on~$\Omega$.
This group has a (normal) subgroup
$$
\aut(\cX)=\{f\in\sym(\Omega)\!:\ s^f=s\ \,\text{for all}\ \, s\in \sS\}
$$
called the  \df{automorphism group} of~$\cX$.

\subsection{Coherent configurations}
A rainbow $\cX=(\Omega,\sS)$ is called a  \df{coherent configuration} if, for any $r,s,t\in \sS$, the number
\[
c_{rs}^t:=|\alpha r\cap \beta s^*|
\]
does not depend on the choice of $(\alpha,\beta)\in t$;  the numbers~$c_{rs}^t$ are called
the  \df{intersection numbers} of~$\cX$. In this case, the set $\sS^\cup$ contains
the relation $r\cdot s$ for all $r,s\in \sS^\cup$;  this relation is obviously the union
(possibly empty) of those $t\in\sS$ for which $c_{rs}^t\ne 0$.

Let $\cX$ be a coherent configuration. Then for any $s\in \sS$, the sets $\Omega_{-}(s)$ and $\Omega_{+}(s)$
are the fibers of~$\cX$. In particular, the union
$$
\sS=\bigcup_{\Delta,\Gamma\in \sF(\cX)}\sS_{\Delta,\Gamma}
$$
is disjoint, where $\sS_{\Delta,\Gamma}$ consists of all $s\in \sS$, contained in $\Delta\times\Gamma$ .
The number $|\delta s|$ with $\delta\in\Delta$ equals the intersection number $c_{ss^*}^{1_\Delta}$,
and hence does not depend on the choice of the point~$\delta$. It is called the  \df{valency} of $s$ and
denoted by $n_s$. 


\subsection{Algebraic isomorphisms and separability}
Let $\cX=(\Omega,\sS)$ and $\cX'=(\Omega',\sS')$ be coherent configurations.
A bijection $\varphi\!:\sS\to \sS',\ r\mapsto r'$ is called an \df{algebraic isomorphism} from~$\cX$ onto~$\cX'$ if
\qtnl{f041103p1}
c_{rs}^t=c_{r's'}^{t'},\qquad r,s,t\in \sS;
\eqtn
the set of all such $\varphi$ is denoted by $\Aiso(\cX,\cX')$.

Each isomorphism~$f$ from~$\cX$ onto~$\cX'$ induces an algebraic isomorphism between these configurations,
which maps $r\in \sS$ to $r^f\in \sS'$. A coherent configuration~$\cX$ is said to be  \df{separable}
if  every algebraic isomorphism from $\cX$ to another coherent configuration  is induced by a suitable bijection
(which in this case is an isomorphism of the configurations in question).

The algebraic isomorphism $\varphi$ induces a bijection from $\sS^\cup$ onto $(\sS')^\cup$:
the union $r\cup s\cup\cdots$ of basis relations of $\cX$ is taken to $r'\cup s'\cup\cdots$.
This bijection is also denoted by $\varphi$. It preserves the dot product, i.e.,
$\varphi(r\cdot s)=\varphi(r)\cdot\varphi(s)$ for all $r,s\in\sS$.

One can see that if $\Delta\in\sF(\cX)$, then  $\varphi(1_{\Delta^{}})=1_{\Delta'}$ for some $\Delta'\in\sF(\cX')$ (and $|\Delta|=|\Delta'|)$;
we also denote such a $\Delta'$ by $\Delta^\varphi$. This extends $\varphi$ to a bijection $\sF(\cX)\to \sF(\cX')$
so that $(1_{\Delta})'=1_{\Delta'}$ for all~$\Delta$.

\subsection{Parabolics and quotients}\label{ss:parabolics}
Let $\cX=(\Omega,\sS)$ be a rainbow.
An equivalence relation that is an element of $\sS^\cup$ is called a \df{parabolic} of~$\cX$.
The parabolic $1_\Omega$ is said to be trivial.
Suppose further that $\cX$ is a coherent configuration.
An important property of a parabolic $e$ is that if $\varphi$ is an algebraic isomorphism
from $\cX$ to a coherent configuration~$\cX'$, then $\varphi(e)$ is a parabolic of~$\cX'$
and
\qtnl{220320a}
|\alpha e|=|\alpha' e'|,\qquad \textrm{~for all~}\alpha\in\Delta,\ \alpha'\in\Delta',\ \Delta\in\sF(\cX),
\eqtn
where $e'=\varphi(e)$ and $\Delta'=\Delta^\varphi$. When $\cX=\cX'$ and $\varphi$ is the identical mapping,
this shows that the  classes of the equivalence relation $e$ restricted to $\Delta\in\sF$ have the same cardinality.

Let $e$ be an equivalence relation on~$\Omega$. Denote by $\Omega/e$ the set of all classes of~$e$.
The map
\begin{equation}\label{eq-ro}
  \rho_e\!:\Omega\to\Omega/e,\  \alpha\mapsto\alpha e,
\end{equation}
is obviously a surjection. It induces a natural surjection, also denoted by~$\rho_e$,
from the binary relations on $\Omega$ to those on $\Omega/e$,
in particular, $\sS/e=\{\rho_e(s)\!:\ s\in\sS\}$.
Given a partition $\pi$ of $\Omega$, we set $\pi/e$ to be the partition of $\Omega/e$ with classes $\rho_e(\Delta)$, $\Delta\in\pi$.

Suppose that $e$ is a parabolic of~$\cX$.
Then the pair
$$
\cX/e=
(\Omega/e,\sS/e)
$$
is a coherent configuration. The mapping $\rho_e$ induces a surjection
from the parabolics (respectively, fibers) of $\cX$ on those of~$\cX/e$.  Every algebraic isomorphism $\varphi$ from $\cX$
onto a coherent configuration~$\cX'$ induces a natural algebraic isomorphism from $\cX/e$ onto~$\cX'/e'$, taking $\rho_{e^{}}(s)$ to  $\rho_{e'}(\varphi(s))$ for all $s\in\sS$, where $e'=\varphi(e)$.
Further details can be found in \cite[Section~2.1.3]{CP2019}.

\subsection{Graphs}
By a \df{graph} we mean a finite simple undirected graph, i.e., a pair $X=(\Omega,E)$
of a finite set $\Omega$ of vertices and an irreflexive symmetric relation $E\subseteq \Omega\times \Omega$,
which represents the edge set of $X$.
The elements of $E=:E(X)$, which are ordered pairs of vertices, are
called \df{arcs}, and $E$ is the \df{arc set} of the graph $X$.
Two vertices $\alpha,\beta\in \Omega$ are said to be \df{adjacent} (in $X$)
whenever $(\alpha,\beta)\in E$; 
we also say that $\beta$ is an $X$-\df{neighbor} of $\alpha$.
A vertex is said to be \df{pendant}, if it has a unique $X$-neighbor.
The graph $X$ is \df{regular} if the number of $X$-neighbors of $\alpha$ is the same for all vertices $\alpha\in \Omega$.
The \df{distance} between any two vertices of $X$
is defined as usual to be the length of a shortest path in~$X$ from one to the other.
For $\Delta\subseteq \Omega$, let $X\setminus \Delta$ denote the subgraph of $X$ induced by $\Omega\setminus \Delta$.

Two vertices $\alpha$ and $\beta$ of $X$
are called \df{twins} (in $X$) if, for any vertex $\gamma\in\Omega\setminus\{\alpha,\beta\}$,
the set $\gamma E$ contains either both $\alpha$ and $\beta$ or none of them.
The relation $e$ ``to be twins in $X$'' is an equivalence relation on $\Omega$.
An equivalence relation contained in $e$ is called a \df{twin equivalence} of~$X$.

Let $e$ be a twin equivalence of~$X$. Denote by $X/e$ the graph with vertex set $\Omega/e$, in which
two distinct vertices $\alpha e$, $\beta e$ are adjacent whenever every two vertices, one in $\alpha e$ and the other in $\beta e$,
are adjacent in $X$. We say that $X/e$ is the \df{quotient graph} of the graph $X$.


\begin{lemma}\label{lm:quotgraph}
 Let $e$ be a twin equivalence of a graph~$X$. Then the quotient graph $X/e$ is isomorphic to an induced subgraph of $X$.
\end{lemma}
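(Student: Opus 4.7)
The plan is to realize $X/e$ as the induced subgraph of $X$ on a transversal of the classes of $e$. Concretely, choose a system of representatives $T\subseteq\Omega$, one vertex $t_C\in C$ for each class $C$ of $e$, and consider the map $\varphi\!:T\to\Omega/e,\ t_C\mapsto C$. This $\varphi$ is clearly a bijection between the vertex set of $X[T]$ (the subgraph of $X$ induced on $T$) and the vertex set of $X/e$, so the whole content of the lemma is that $\varphi$ preserves adjacency.

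The core step is a uniformity claim: for any two distinct classes $C,C'$ of $e$ and any $\alpha,\alpha'\in C$, $\beta,\beta'\in C'$, the pair $(\alpha,\beta)$ lies in $E(X)$ if and only if $(\alpha',\beta')$ does. Since $C\neq C'$, the vertex $\beta$ lies outside $\{\alpha,\alpha'\}$, and $\alpha,\alpha'$ are twins in $X$; hence $\beta E$ contains both or neither of $\alpha,\alpha'$. Applying the same observation on the $C'$-side (with external vertex $\alpha'$) gives the claim. In other words, the bipartite set $C\times C'$ is either entirely contained in $E(X)$ or entirely disjoint from it.

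With the uniformity claim in hand, the rest is immediate: by definition, $C$ and $C'$ are adjacent in $X/e$ iff every pair in $C\times C'$ is an edge of $X$, which by the claim is equivalent to the single pair $(t_C,t_{C'})$ being an edge, i.e., $t_C,t_{C'}$ being adjacent in $X[T]$. Therefore $\varphi$ is a graph isomorphism from $X[T]$ onto $X/e$, as required.

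I do not expect a genuine obstacle; the argument is a direct unwinding of the definitions of \emph{twin} and \emph{quotient graph}. The only point demanding a little care is that $e$ is only required to be contained in the full twin equivalence of $X$, so its classes may be strictly finer than the twin classes; but each class of $e$ still consists of pairwise twins in $X$, which is exactly what the uniformity step uses.
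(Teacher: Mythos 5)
Your proof is correct and follows essentially the same route as the paper: both realize $X/e$ as the induced subgraph on a transversal of the $e$-classes, with your explicit ``uniformity claim'' being precisely the observation the paper leaves implicit in asserting that adjacency of $\alpha e$ and $\beta e$ is equivalent to adjacency of any (equivalently, all) pair of representatives.
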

\begin{proof}
  By the definition of $X/e$,
  $\alpha e$ and $\beta e$ are adjacent in $X/e$
  if and only if $\alpha'$ and $\beta'$ are adjacent in $X$ for all $\alpha'\in \alpha e$, $\beta'\in \beta e$.
  Thus, $X/e$ can be seen as a graph obtained from $X$ by removing from each equivalence class of $e$
  all but one (arbitrarily chosen) vertex.
\end{proof}

A graph $X$ is called \df{distance-hereditary} if the distance between any two vertices
in any connected induced subgraph of $X$ is the same as it is in $X$. The lemma below immediately follows from the definition.


\begin{lemma}\label{lm:DHinduced}
	An induced subgraph of a distance-hereditary graph is distance-hereditary.
\end{lemma}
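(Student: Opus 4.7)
The plan is to unfold the definition and chase distances. Let $Y$ be an induced subgraph of a distance-hereditary graph $X$, and let $Z$ be an arbitrary connected induced subgraph of $Y$. I need to verify that for every pair $u,v$ of vertices of $Z$, the distance $d_Z(u,v)$ equals the distance in $Y$ between $u$ and $v$.

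First I would record the transitivity of the ``induced subgraph'' relation: since $Z$ is an induced subgraph of $Y$ and $Y$ is an induced subgraph of $X$, the graph $Z$ is a connected induced subgraph of $X$. Applying the hypothesis that $X$ is distance-hereditary to this $Z$ yields $d_Z(u,v)=d_X(u,v)$.

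Next I would handle the $Y$-side. Since $u$ and $v$ both lie in the connected subgraph $Z\subseteq Y$, they belong to the same connected component $Y'$ of $Y$, and $Y'$ is a connected induced subgraph of $X$. Applying the distance-hereditary property of $X$ once more gives $d_{Y'}(u,v)=d_X(u,v)$, and of course $d_Y(u,v)=d_{Y'}(u,v)$ because distances in $Y$ between two vertices of the same component coincide with distances in that component. Combining these equalities produces $d_Z(u,v)=d_Y(u,v)$, which is exactly the distance-hereditary condition for~$Y$.

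I do not expect any real obstacle: the argument is a direct application of the definition, with the only mild care being to observe that although $Y$ itself need not be connected, the two vertices in question sit in a common connected component, so the distance-hereditary property of $X$ can be invoked on both $Z$ and that component simultaneously.
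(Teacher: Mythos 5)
Your argument is correct and is exactly the routine unfolding the paper has in mind: the paper offers no written proof, merely remarking that the lemma ``immediately follows from the definition.'' Your extra care in passing to the connected component $Y'$ of $Y$ (since $Y$ need not be connected) is the one point worth making explicit, and you make it correctly.
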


Let us recall the three one-vertex extensions by means of which
all finite connected distance-hereditary graphs can be constructed.
Let $X$ be a graph and let $\alpha$ be any vertex of $X$.
Extend $X$ to a graph $X'$ by adding a new vertex $\alpha'$ to $X$ with new edges from $\alpha'$ to either
\begin{itemize}
	\item only $\alpha$, 
	\item $\alpha$ and all its $X$-neighbors,
	\item just all $X$-neighbors of $\alpha$.
\end{itemize}
In the first case the new  vertex $\alpha'$ has degree 1 in $X'$ and we say that
$X'$ is obtained from $X$ by \df{attaching a pendant vertex}, which is $\alpha'$.
In the remaining two cases the vertices $\alpha, \alpha'$ are twins in $X'$,
and we say that $X'$ is obtained from $X$ by \df{splitting a vertex}.



\begin{theorem}\cite[Theorem~1]{BM1986}\label{theo:DHinduction}
	A finite connected graph is distance-hereditary if and only if it is obtained
	from the one-vertex graph by a sequence of one-vertex extensions: attaching pendant vertices and splitting vertices.
\end{theorem}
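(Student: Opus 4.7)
\medskip

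\noindent\textbf{Proof plan.} The ``if'' direction is a straightforward induction on the number of vertices. The one-vertex graph is vacuously distance-hereditary. Suppose $X'$ is obtained from a distance-hereditary graph $X$ by either attaching a pendant $\alpha'$ at $\alpha$, or by splitting $\alpha$ into twins $\alpha,\alpha'$. Let $H$ be any connected induced subgraph of $X'$. If $\alpha'\notin H$, then $H$ is a connected induced subgraph of $X$, and distances agree (any $X'$-path through $\alpha'$ can be shortcut through $\alpha$). In the pendant case with $\alpha'\in H$, connectedness forces $\alpha\in H$, and for any other $\beta\in H$ one has $d_H(\alpha',\beta)=1+d_{H\setminus\{\alpha'\}}(\alpha,\beta)=1+d_X(\alpha,\beta)=d_{X'}(\alpha',\beta)$. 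In the splitting case with $\alpha'\in H$: if $\alpha\notin H$, replacing $\alpha'$ by $\alpha$ gives an isomorphic induced subgraph of $X$; if both $\alpha,\alpha'\in H$, they remain twins in $H$ (so distances from $\alpha'$ coincide with those from $\alpha$), which are inherited from $X$ via the connected induced subgraph $H\setminus\{\alpha'\}$.

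The ``only if'' direction is where the content lies; proceed by induction on $|V(X)|$. The reduction step rests on the following key lemma: \emph{every finite connected distance-hereditary graph on at least two vertices contains either a pendant vertex or a pair of twins.} Granted this, pick such a removable vertex $\alpha'$. By Lemma~\ref{lm:DHinduced}, $X\setminus\{\alpha'\}$ is again distance-hereditary, and it remains connected (in the pendant case because $\alpha'$ was a leaf; in the twin case because any walk through $\alpha'$ can be rerouted through its twin $\alpha$). By the induction hypothesis, $X\setminus\{\alpha'\}$ is constructible, and reattaching $\alpha'$---as a pendant at $\alpha$ or as a split of $\alpha$---recovers $X$.

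The main obstacle is the key lemma itself. The natural route is via Howorka's structural characterization: a graph is distance-hereditary if and only if it has no induced cycle of length at least~$5$ and no induced house, gem, or domino. Fix a vertex $v$ of maximum eccentricity $k\ge 1$ and form its BFS layers $N_0(v),\dots,N_k(v)$. Pick $u\in N_k(v)$ and set $N_u:=N(u)\cap N_{k-1}(v)$. If $N_u=\{w\}$ and $u$ has no neighbours in $N_k(v)$, then $u$ is a pendant vertex and we are done. Otherwise, a finite case analysis on the graph induced by $N_u\cup\{u\}\cup(N(u)\cap N_k(v))$ together with shortest paths back to $v$ shows that any configuration in which $u$ fails to have a twin would produce one of the forbidden induced subgraphs in Howorka's list---either a chordless cycle of length at least~$5$ assembled from two distinct vertices of $N_u$ and two $v$-paths, or a house, gem, or domino arising from a cross-chord pattern. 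Consequently some vertex $u'\neq u$ must satisfy $N(u')\setminus\{u\}=N(u)\setminus\{u'\}$, yielding the required twin pair. This local case analysis at the deepest BFS layer is the technical crux; the remaining argument is bookkeeping.
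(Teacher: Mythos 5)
The paper offers no proof of this statement: it is imported verbatim from \cite[Theorem~1]{BM1986}, so there is no internal argument to compare yours against, and the proposal must be judged on its own. Your ``if'' direction is correct and essentially complete: the induction, the rerouting of paths through $\alpha'$ via its twin $\alpha$, and the case split according to whether $\alpha'$ (and $\alpha$) lie in the connected induced subgraph $H$ are all sound, modulo the routine check that $H\setminus\{\alpha'\}$ stays connected in each case.

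The ``only if'' direction, however, has a genuine gap exactly where the theorem's content lies. Your reduction is the standard one (peel off a pendant vertex or one member of a twin pair, apply Lemma~\ref{lm:DHinduced} and induction), and it correctly isolates the key lemma --- which is Corollary~\ref{coro:DHinduction} of the paper, there \emph{derived from} this very theorem, so you must prove it independently. But your treatment of that lemma is a statement of intent rather than a proof. First, you import Howorka's forbidden-subgraph characterization \cite{H1977} as a black box; that is legitimate in principle but leaves a second unproved theorem in the chain. Second, and more seriously, the asserted ``finite case analysis on the graph induced by $N_u\cup\{u\}\cup(N(u)\cap N_k(v))$'' is neither finite nor local: the set $N_u$ and the component of $u$ inside the last BFS layer can be arbitrarily large, and the argument actually required runs across the last two or three layers. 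One must show, roughly, that any two vertices in a common connected component of the subgraph induced on $N_k(v)$ have the same neighbourhood in $N_{k-1}(v)$; that each such component, being a cograph, contains a twin pair of its own whenever it has at least two vertices; and that a singleton component $\{u\}$ with $|N_u|\ge 2$ forces a twin of $u$ that may lie far from the last layer (already in $C_4$ the unique last-layer vertex has the BFS root as its only twin). Each of these steps needs its own forbidden-subgraph argument, and none is carried out; the claim that ``some $u'\neq u$ must satisfy $N(u')\setminus\{u\}=N(u)\setminus\{u'\}$'' is precisely the conclusion to be proved, not a consequence of anything you have written. Until that lemma is actually established, the ``only if'' direction is not proved.
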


\begin{corollary}\label{coro:DHinduction}
	A distance-hereditary graph with at least two vertices has
	either a pendant vertex or 
    two distinct twins.
\end{corollary}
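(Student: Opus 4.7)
My plan is to derive this corollary as a direct consequence of the inductive characterization in Theorem~\ref{theo:DHinduction}. The key observation is that in any construction sequence witnessing that characterization, the \emph{last} one-vertex extension itself already supplies either a pendant vertex or a pair of twins in the final graph $X$, since no further modifications occur after it.

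First I would treat the connected case. Let $X$ be a connected distance-hereditary graph on at least two vertices. Theorem~\ref{theo:DHinduction} gives a sequence of one-vertex extensions that produces $X$ from the one-vertex graph; since $|V(X)|\ge 2$, this sequence has at least one step. Let $\alpha'$ denote the vertex introduced by the last extension. If that extension attaches a pendant vertex, then by construction $\alpha'$ has exactly one neighbor in $X$ and is therefore pendant. If that extension splits a vertex $\alpha$, then by definition $\alpha$ and $\alpha'$ have identical neighborhoods among the vertices of $X\setminus\{\alpha,\alpha'\}$, so they are twins in $X$.

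Next I would dispatch the disconnected case, should the corollary be read without an implicit connectedness assumption. If every connected component of $X$ is a singleton, then any two distinct vertices $\alpha,\beta$ are (false) twins, because no other vertex of $X$ is adjacent to either. Otherwise there is a component $C$ with at least two vertices; by Lemma~\ref{lm:DHinduced}, $C$ is connected and distance-hereditary, so the previous case yields a pendant vertex or a pair of twins inside $C$. Either feature transfers verbatim to $X$: the unique neighbor of a pendant vertex of $C$ is unchanged in $X$, and twins inside $C$ remain twins in $X$ since any vertex outside $C$ is adjacent to neither of them.

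I do not foresee any genuine obstacle here; the argument is a direct reading of Theorem~\ref{theo:DHinduction}. The only point deserving care is to verify that the properties created by the final extension really survive in $X$, which is immediate because the extension is the last step in the construction, and, in the disconnected case, to check that pendancy and twin-hood are preserved under adding isolated components, which is a routine verification from the definitions.
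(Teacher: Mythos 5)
Your proof is correct and follows exactly the argument the paper intends (the corollary is stated without proof as an immediate consequence of Theorem~\ref{theo:DHinduction}): the last one-vertex extension in the construction sequence yields either a pendant vertex or a twin pair. Your explicit treatment of the disconnected case is a welcome addition, since Theorem~\ref{theo:DHinduction} applies only to connected graphs while the corollary is later invoked for graphs that need not be connected.
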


\subsection{Coherent closure}\label{ss:closure}
There is a natural partial order\, $\le$\, on the set of all coherent configurations on the same set~$\Omega$.
Namely, given two coherent configurations $\cX=(\Omega,\sS)$ and $\cX'=(\Omega,\sS')$, we set
$$
\cX\le\cX'\ \Leftrightarrow\ \sS^\cup\subseteq (\sS')^\cup.
$$
The \df{coherent closure} $\wL(T)$ of a set $T$ of relations on $\Omega$, is defined to be the smallest coherent configuration
on $\Omega$, which contains $T$ as a set of relations. The operator $\wL$ is monotone in the sense that if $S^\cup\subseteq T^\cup$,
then $\wL(S)\le\wL(T)$ and, moreover, $\wL(\rho_e(S))\le\wL(\rho_e(T))$ if $e$ is an equivalence relation on~$\Omega$.

Let $X$ be a graph. The coherent configuration $\wL(X)$ of $X$ is defined to be the coherent closure of the set $T=\{E(X)\}$.
For a partition $\pi$ of the vertex set of $X$, we denote by $\wL(X)_\pi$ the coherent closure of the set $T$ consisting
of $E(X)$ and all $1_\Delta$, $\Delta\in\pi$.

\begin{lemma}\label{lm:aut}
In the above notation, denote by $\aut(X)_\pi$ the subgroup of $\aut(X)$ leaving each class of~$\pi$ fixed.
Then	
$\aut(\wL(X)_\pi)=\aut(X)_\pi$.
\end{lemma}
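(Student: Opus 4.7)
The plan is to prove the two inclusions $\aut(\wL(X)_\pi)\subseteq\aut(X)_\pi$ and $\aut(X)_\pi\subseteq\aut(\wL(X)_\pi)$ separately.

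The first inclusion is immediate from the definition. Any $f\in\aut(\wL(X)_\pi)$ fixes every basis relation of $\wL(X)_\pi$ and therefore every relation in $(\wL(X)_\pi)^\cup$. By the definition of $\wL(X)_\pi$, the arc set $E(X)$ and each diagonal $1_\Delta$ with $\Delta\in\pi$ lie in $(\wL(X)_\pi)^\cup$. Consequently $(E(X))^f=E(X)$, so $f\in\aut(X)$, and $(1_\Delta)^f=1_\Delta$ for each $\Delta\in\pi$, which means $f$ preserves every class of $\pi$ setwise; thus $f\in\aut(X)_\pi$.

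For the reverse inclusion, I would introduce the orbit coherent configuration of $G:=\aut(X)_\pi$. It is a standard fact that the partition of $\Omega\times\Omega$ into the orbits of $G$ (acting diagonally) is the set of basis relations of a coherent configuration $\cY$ on $\Omega$, and that $G\subseteq \aut(\cY)$ (see, e.g., \cite{CP2019}). Since $E(X)$ is $G$-invariant by the definition of $G$, and each $1_\Delta$ with $\Delta\in\pi$ is also $G$-invariant, both $E(X)$ and the relations $1_\Delta$ are unions of $G$-orbits, hence lie in $\sS(\cY)^\cup$. By the minimality built into the definition of the coherent closure, this gives $\wL(X)_\pi\le \cY$.

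Finally I would use the general monotonicity: if $\cX\le \cX'$, then every basis relation of $\cX$ is a union of basis relations of $\cX'$, so any permutation fixing all basis relations of $\cX'$ fixes all basis relations of $\cX$; consequently $\aut(\cX')\subseteq\aut(\cX)$. Applied to $\wL(X)_\pi\le\cY$, this yields $\aut(\cY)\subseteq\aut(\wL(X)_\pi)$, and combining with $G\subseteq\aut(\cY)$ gives $\aut(X)_\pi=G\subseteq\aut(\wL(X)_\pi)$, completing the proof. I do not expect any real obstacle: the statement is a direct consequence of the universal property of $\wL$ together with the fact that orbit partitions of permutation groups are coherent configurations, and the only point requiring care is making sure the order $\le$ is used in the correct direction (finer configuration corresponds to smaller automorphism group).
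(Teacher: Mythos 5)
Your proof is correct and is essentially the argument behind the general fact the paper invokes (\cite[Theorem~2.6.4]{CP2019}, which states that $\aut(\wL(T))$ coincides with the group of permutations preserving every relation in $T$): the easy inclusion from $E(X),1_\Delta\in\sS(\wL(X)_\pi)^\cup$, and the reverse one via the orbit coherent configuration of $\aut(X)_\pi$ together with the minimality of the coherent closure and the antitonicity of $\aut$ with respect to $\le$. The only difference is that you spell out the standard proof rather than citing it.
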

\begin{proof}
Follows from~\cite[Theorem~2.6.4]{CP2019}.
\end{proof}

In the present paper, we avoid (vertex) colored graphs, instead we prefer to speak on a graph~$X$ equipped with partition $\pi$  of  the vertex set (of $X$). We say that $\pi$ is  \df{correct} if
\[
\pi=\sF(\wL(X)_\pi),
\]
In what follows, we also say that $\pi$ a  \df{correct partition} of the graph $X$.
	
The exact definition of the WL-dimension of a graph requires a discussion about the $d$-dimensional Weisfeiler-Leman algorithm,
which is beyond the scope of the present paper; we refer the interested reader to the monograph~\cite{Grohe2017}.
In the theorem below, we cite a characterization of regular graphs of the WL-dimension~$1$ and a characterization of graphs
of the WL-dimension at most~$2$. The proofs can be found in \cite[Lemma~3.1(a)]{AKRV2017}  and \cite[Theorem~2.1]{FKV2019}, respectively.

%

\begin{theorem}\label{theo:dim1or2}
Let $X$ be a graph and $d$ the WL-dimension of~$X$.
\begin{itemize}
  \item[$(1)$] If $X$ is regular, then $d=1$ if and only if  $X$ or its complement
  is isomorphic to a complete graph, 
  a cocktail party graph\footnote{A {\it matching graph} in the terminology of \cite{AKRV2017}.}, or the 5-cycle.
  \item[$(2)$] $d\le 2$ if and only if the coherent configuration $\wL(X)$ is separable.
\end{itemize}
\end{theorem}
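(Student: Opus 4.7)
For part (1), my plan is to show that the $1$-dim WL (color refinement) collapses to a trivial invariant on any regular graph. The initial refinement colors each vertex by its degree, so on a $k$-regular graph the colouring is constant; the refinement step assigns to each vertex the pair (current colour, multiset of neighbour colours), which remains constant once all vertices already agree. Hence the stable $1$-WL invariant of a $k$-regular graph on $n$ vertices encodes exactly the pair $(n,k)$, and — since any graph with a different degree sequence is automatically separated — $X$ has WL-dimension one if and only if $X$ is the unique $k$-regular graph on $n$ vertices up to isomorphism. The easy direction, that $K_n$, its complement, the cocktail-party graph $K_{n\times 2}$, its complement (the perfect matching), and $C_5$ are each determined by their order and valency, is a direct check. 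The hard direction is a finite case analysis that must exhibit, for every other pair $(n,k)$, two non-isomorphic $k$-regular graphs on $n$ vertices; the small cases are handled by explicit examples ($C_6$ vs.\ $2C_3$ for $(6,2)$, $K_{3,3}$ vs.\ the $3$-prism for $(6,3)$, and so on), while the remaining ranges are covered by a switching construction that replaces two independent edges $uv$, $u'v'$ by $uu'$, $vv'$ whenever this preserves simplicity and regularity, producing a non-isomorphic cospectral companion as soon as there is enough room.

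For part (2), the plan exploits the standard fact that the $2$-dim WL on a graph $X$ computes exactly the coherent closure $\wL(X)$ of $E(X)$, and that two graphs $X,Y$ are $2$-WL equivalent if and only if there is an algebraic isomorphism $\varphi\!:\wL(X)\to \wL(Y)$ with $\varphi(E(X))=E(Y)$. If $\wL(X)$ is separable, then any such $\varphi$ is induced by a bijection $f\!:V(X)\to V(Y)$, and since $f$ sends $E(X)$ to $E(Y)$ it is a graph isomorphism, so $X$ is $2$-WL identified. For the converse, given an algebraic isomorphism $\varphi\!:\wL(X)\to\cY$ that is not induced by any bijection, I would define $E(Y):=\varphi(E(X))$ on the point set of $\cY$ (the relation is irreflexive and symmetric because $\varphi$ transports the corresponding properties of $E(X)$), and then argue that $\wL(Y)=\cY$: the pre-image $\varphi^{-1}(\wL(Y))$ is a coherent configuration containing $E(X)$, hence contains $\wL(X)$, which forces equality. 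Therefore $\varphi$ restricts to an edge-preserving algebraic isomorphism $\wL(X)\to\wL(Y)$, showing that $X$ and $Y$ have identical $2$-WL outputs.

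The main obstacles I would expect are, on the one hand, the genuinely tedious finite case analysis in part (1) excluding all pairs $(n,k)$ outside the five listed families, and on the other hand, the delicate passage in the converse of part (2) from ``$\varphi$ is not induced by a bijection'' to ``$X$ and $Y$ are not isomorphic.'' The latter requires more than producing a single non-induced $\varphi$: one must observe that $\Aiso(\wL(X),\cY)$ is a torsor over the group of algebraic automorphisms of $\wL(X)$, and choose $\varphi$ in a coset not hit by any graph isomorphism $X\to Y$, so that no isomorphism between $X$ and $Y$ exists at all. This is where the precise match between separability and the $2$-WL identifiability of $X$ is really earned.
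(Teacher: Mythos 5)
First, a point of reference: the paper does not prove this theorem at all --- it is quoted from \cite[Lemma~3.1(a)]{AKRV2017} and \cite[Theorem~2.1]{FKV2019} --- so any self-contained argument is necessarily ``different from the paper''. Your reduction in part (1), namely that colour refinement stabilizes immediately on a regular graph, so a regular $X$ has WL-dimension $1$ exactly when it is the unique $k$-regular graph on $n$ vertices up to isomorphism, is correct, and the five families in the statement are indeed the answer to that classification. But that classification (for every pair $(n,k)$ with $2\le k\le n-3$ other than $(5,2)$ there exist two non-isomorphic $k$-regular graphs on $n$ vertices) is the entire content of the cited lemma, and you leave it as a sketch; the switching construction does work, but the boundary cases where $k$ or $n-1-k$ is small must be handled explicitly, so as written this part is an outline of a known proof rather than a proof.

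The genuine gap is in the converse of part (2). Given a non-induced $\varphi\in\Aiso(\wL(X),\cY)$ you correctly build $Y$ with $E(Y)=\varphi(E(X))$ and $\wL(Y)=\cY$, and you correctly flag that one must still rule out $X\cong Y$. But the fix you propose --- viewing $\Aiso(\wL(X),\cY)$ as a torsor over the algebraic automorphisms and ``choosing $\varphi$ in a coset not hit by any graph isomorphism'' --- does not close it: you are not free to choose $\varphi$ (non-separability hands you one specific non-induced map, and composing it with an algebraic automorphism may well yield an induced one), and the torsor structure by itself says nothing about which elements are induced or edge-respecting. The missing ingredient is the canonicity of the WL colours: there is \emph{at most one} algebraic isomorphism $\wL(X)\to\wL(Y)$ carrying $E(X)$ to $E(Y)$, because the basis relations of a coherent closure are the stable colour classes of the refinement started from the partition $\{1_\Omega,E,(\Omega\times\Omega)\setminus(1_\Omega\cup E)\}$, and an intersection-number-preserving bijection that matches the initial classes must match every refined class (induction on refinement rounds). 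With this uniqueness in hand, any graph isomorphism $g\colon X\to Y$ would induce an edge-respecting algebraic isomorphism, which would therefore coincide with $\varphi$, contradicting the assumption that $\varphi$ is not induced; hence $X\not\cong Y$ while the $2$-WL outputs agree. That single lemma, not the coset argument, is what earns the equivalence.
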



\section{Twins in coherent configurations}\label{sect:twins}
Let $\cX=(\Omega,\sS)$ be a rainbow.
Two points $\alpha$ and $\beta$ are called $\kk{X}$-\df{twins} if
\[
\boldsymbol{r}(\gamma,\alpha)=\boldsymbol{r}(\gamma,\beta)\quad\text{for all}\ \ \,\gamma\in\Omega\setminus\{\alpha,\beta\}.
\]
Obviously, any two $\cX$-twins belong to the same fiber of $\cX$.
Furthermore, the relation ``being $\kk{X}$-twins'' is an equivalence relation on $\Omega$.
We denote it by~$e_{\kk{X}}$.

\begin{lemma}\label{lm:twin_is_parabolic}
Let $\kk{X}$ be a coherent configuration. Then $e:=e_{\cX}$ is a parabolic of $\kk{X}$.
Moreover, for  all irreflexive relations $r,s\in\sS(\cX)$, we have $r=s$ if and only if $\rho_e(r)=\rho_e(s)$.
\end{lemma}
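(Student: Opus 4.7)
My plan for the first assertion is to show that $e := e_{\cX}$ belongs to $\sS^\cup$; together with the already-noted fact that $e$ is an equivalence relation, this will give that $e$ is a parabolic of $\cX$. Since $1_\Omega = \bigcup_{\Delta \in \sF(\cX)} 1_\Delta$ is already in $\sS^\cup$, the core step is to show that every irreflexive basis relation $r$ meeting $e$ is contained in $e$. To this end, I will rewrite the twin property $\bR(\gamma, \alpha) = \bR(\gamma, \beta)$ for $\gamma \in \Omega \setminus \{\alpha, \beta\}$ in terms of intersection numbers: for any two distinct basis relations $t, u$, the total count of $\gamma \in \Omega$ with $(\gamma, \alpha) \in t$ and $(\gamma, \beta) \in u$ equals $c_{t^*, u}^r$, and from this I subtract the contributions of $\gamma = \alpha$ and $\gamma = \beta$, which are non-zero only for the specific pairs $(t, u) = (1_\Delta, r)$ and $(t, u) = (r^*, 1_\Delta)$ respectively (where $\Delta$ is the fiber of $\alpha$). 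The resulting twin condition depends only on $r$ (and not on the representative $(\alpha, \beta) \in r$), so $r \cap e \ne \emptyset$ forces $r \subseteq e$.

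For the second assertion, the forward direction is immediate from the definition of $\rho_e$. For the converse, suppose $\rho_e(r) = \rho_e(s)$ with $r, s$ irreflexive basis relations, and fix $(\alpha, \beta) \in r$. The inclusion $(\alpha e, \beta e) \in \rho_e(s)$ gives some $\alpha^* \in \alpha e$ and $\beta^* \in \beta e$ with $(\alpha^*, \beta^*) \in s$. In the generic case $(\alpha^*, \beta^*) \ne (\beta, \alpha)$, I plan to transport this pair to $(\alpha, \beta)$ via two successive twin substitutions $\alpha^* \to \alpha$ and $\beta^* \to \beta$, each justified by the definition of $\cX$-twins with the other coordinate playing the role of $\gamma$. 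An obstruction in one order occurs exactly when $\alpha = \beta^*$ (respectively $\alpha^* = \beta$); since not both can hold in the generic case, choosing the order appropriately keeps the intermediate pair off the diagonal, yielding $(\alpha, \beta) \in s$.

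The main obstacle is the remaining case $(\alpha^*, \beta^*) = (\beta, \alpha)$, which forces $\alpha$ and $\beta$ to be distinct twins and $(\beta, \alpha) \in s$. I will dispose of it by a short valency count showing that any basis relation containing a twin pair is symmetric. Namely, the twin property gives $\alpha r \cap (\Omega \setminus \{\alpha, \beta\}) = \beta r \cap (\Omega \setminus \{\alpha, \beta\})$; combined with $\beta \in \alpha r$, $\alpha \notin \alpha r$ (since $r$ is irreflexive), and $|\alpha r| = |\beta r|$ (since $\alpha, \beta$ lie in a common fiber), this forces $\alpha \in \beta r$, whence $r = r^*$. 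The same argument applied to $s$ gives $s = s^*$, so $(\beta, \alpha) \in s$ implies $(\alpha, \beta) \in s$, and thus $r = s$.
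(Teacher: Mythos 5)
Your proof is correct. It takes a genuinely different route on the first assertion and essentially the paper's route, tightened, on the second. For the parabolic claim, the paper leans on cited structure from \cite{CP2019}: it forms the parabolics $e(s)=\{(\alpha,\beta)\colon \alpha s=\beta s\}$, intersects them over all $s$ with $\Omega_{-}(s)=\Delta$, and then asserts that the classes of the result coincide with the twin classes on $\Delta$ (a step that silently absorbs the excluded points $\gamma\in\{\alpha,\beta\}$ in the twin definition). You instead verify from first principles that twin-ness of a pair $(\alpha,\beta)\in r$ is determined by $r$ alone: the number of witnesses $\gamma$ against it is $c_{t^*u}^{r}$ minus the two explicit diagonal corrections, so every irreflexive basis relation is contained in or disjoint from $e_{\cX}$. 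This is more elementary and self-contained, at the cost of being a computation rather than an appeal to known results; it also handles the exceptional points $\gamma\in\{\alpha,\beta\}$ explicitly rather than implicitly. For the second assertion, both you and the paper transport a pair of $s$ from $(\alpha^*,\beta^*)$ to $(\alpha,\beta)$ by successive twin substitutions; your version is the more careful one, since you isolate the degenerate configuration $(\alpha^*,\beta^*)=(\beta,\alpha)$, which the paper's chain of equalities does not explicitly address, and dispose of it with a clean valency count showing that an irreflexive basis relation containing a twin pair is symmetric. Both arguments are sound; yours fills in the edge cases the paper glosses over.
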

\begin{proof}
To prove the first statement, 
  for $s\in \sS:=\rel{\kk{X}}$, 
  put
  \[
  e(s):=\{(\alpha,\beta)\in \Omega^2\!: \alpha s=\beta s\}.
  \]
  It follows from \cite[Exercise~2.7.8(1)]{CP2019} that $e(s)$ is a parabolic
  of $\cX_{\Omega'}$, where $\Omega':=\Omega_{-}(e(s))$.
  By \cite[Proposition~2.1.18]{CP2019}, for every fiber $\Delta\in \fiber{\cX}$, the relation
  \begin{equation}\label{eq-eD}
    e(\Delta):=\bigcap_{s\in \sS:~\Omega_{-}(s)=\Delta} e(s)
  \end{equation}
  is a parabolic of $\cX_{\Delta}$.
  The classes of $e(\Delta)$ obviously coincide with those of $e_\Delta$.
  Therefore $e$ equals the union of $e(\Delta)$ for all $\Delta\in \fiber{\kk{X}}$.
  Thus, the equivalence relation $e$ is a relation of $\cX$ and hence a parabolic.

  To prove the second statement, it suffices to verify that if relations $r,s\in\sS$ are irreflexive and $\rho_e(r)=\rho_e(s)$,
  then $r=s$. Suppose on the contrary that $r\ne s$. Without loss of generality, we may assume that there exists a pair $(\alpha,\beta)$ belonging to $r\setminus s$. Since the pair $(\alpha e,\beta e)$ belongs both $\rho_e(r)$ and $\rho_e(s)$,
  one can find points $\alpha'\in\alpha e$ and $\beta'\in\beta e$ such that $\boldsymbol{r}(\alpha',\beta')=s$. On the other hand, since $e=e_\cX$, this implies that
$$
r=\boldsymbol{r}(\alpha,\beta')=\boldsymbol{r}(\alpha',\beta)
$$
and hence
$$
r^*=\boldsymbol{r}(\beta,\alpha')=\boldsymbol{r}(\beta',\alpha')=s^*,
$$
whence $r=s$, a contradiction.
\end{proof}

In view of Lemma \ref{lm:twin_is_parabolic}, the equivalence relation $e_{\kk{X}}$  is called the \df{twin parabolic}
of the coherent configuration~$\kk{X}$. A characterization of the twin parabolic among the other parabolics of a coherent
configuration is given in the following statement.

\begin{lemma}\label{lm:e=eX_iff}
Let $e$ be a parabolic of a coherent configuration $\cX$.
Then $e=e_\cX$ if and only if $e$ is a maximal parabolic of $\cX$ satisfying the following two conditions:
\begin{itemize}
  \item[$(1)$] $e\cdot s=s$ for all $s\in\sS(\cX)$ such that $s\cap e=\varnothing$;
  \item[$(2)$] $e_\Delta=1_\Delta$ or $e_{\Delta}\setminus 1_\Delta\in\sS(\cX)$ for all $\Delta\in\sF(\cX)$.
\end{itemize}
\end{lemma}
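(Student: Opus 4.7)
The plan is to prove both directions separately, exploiting the complementary roles of the two conditions: (1) constrains basis relations disjoint from $e$, while (2) constrains those contained in $e$. For the forward direction I would first verify that $e_\cX$ satisfies (1) and (2), and then establish maximality; for the converse I would show that any parabolic $e$ satisfying both conditions is contained in $e_\cX$, so that the forward direction combined with the maximality of $e$ forces $e=e_\cX$.

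To verify (1) for $e=e_\cX$, I would take $s\in\sS(\cX)$ disjoint from $e_\cX$ and a witness $\gamma$ for $(\alpha,\beta)\in e_\cX\cdot s$: the case $\gamma=\alpha$ is immediate, while if $\gamma\neq\alpha$ then $\alpha,\gamma$ are distinct twins, and---since $s$ is irreflexive (as $1_\Omega\subseteq e_\cX$) and every twin-pair basis relation lies in $e_\cX$---one checks $\beta\notin\{\alpha,\gamma\}$, whereupon the twin definition delivers $\bR(\alpha,\beta)=\bR(\gamma,\beta)=s$. To verify (2), I would argue that any irreflexive basis relation $r\subseteq(e_\cX)_\Delta$ consists of twin pairs, so $\rho_{e_\cX}(r)$ contains the reflexive element $(\alpha e_\cX,\alpha e_\cX)$; since $\rho_{e_\cX}(r)$ is itself a basis relation of $\cX/e_\cX$, it must coincide with $1_{\rho_{e_\cX}(\Delta)}$. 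By the second part of Lemma~\ref{lm:twin_is_parabolic}, this image uniquely determines the irreflexive relation $r$, so all such $r$ agree and $(e_\cX)_\Delta\setminus 1_\Delta$ is either empty or a single basis relation.

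The key lever for both the maximality in the forward direction and the entire converse is a single dichotomy, which I would apply to any parabolic $e$ satisfying (1) and (2), any pair $(\alpha,\beta)\in e$ with $\alpha\neq\beta$, and any $\gamma\neq\alpha,\beta$, setting $s:=\bR(\alpha,\gamma)$. If $s\cap e=\varnothing$, then (1) together with $(\beta,\alpha)\in e$ yields $(\beta,\gamma)\in e\cdot s=s$, so $\bR(\beta,\gamma)=\bR(\alpha,\gamma)$. If instead $s\subseteq e$, a symmetric argument places $\bR(\beta,\gamma)$ in $e$ as well, and condition (2) identifies both relations with the unique basis relation in $e_\Delta\setminus 1_\Delta$. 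Either way $\bR(\alpha,\gamma)=\bR(\beta,\gamma)$ for every admissible $\gamma$, so $\alpha,\beta$ are twins, giving $e\subseteq e_\cX$; this yields the converse once combined with the maximality of $e$ and the forward direction. Maximality of $e_\cX$ itself is the contrapositive of the same dichotomy: a hypothetical $e'\supsetneq e_\cX$ would admit $(\alpha,\beta)\in e'\setminus e_\cX$ together with a distinguishing $\gamma$, but the dichotomy would force $\bR(\alpha,\gamma)=\bR(\beta,\gamma)$, contradicting the non-twinhood that witnesses $(\alpha,\beta)\notin e_\cX$.

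The main obstacle is condition~(2) for $e_\cX$: showing that $(e_\cX)_\Delta\setminus 1_\Delta$ is a \emph{single} basis relation, not merely a union, is not visible from the pointwise twin definition and genuinely relies on the injectivity of $\rho_{e_\cX}$ on irreflexive basis relations established in Lemma~\ref{lm:twin_is_parabolic}, combined with the observation that every twin-pair basis relation collapses to the full fiber diagonal in $\cX/e_\cX$. A secondary delicacy, arising in the verification of~(1), is excluding the corner case $\beta\in\{\alpha,\gamma\}$, which is resolved by the irreflexivity of $s$ together with the containment of all twin-pair basis relations in $e_\cX$.
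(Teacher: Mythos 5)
Your overall architecture --- verify (1) and (2) for $e_\cX$, prove a dichotomy showing that any parabolic satisfying (1) and (2) consists of twin pairs, and use that dichotomy both for the maximality of $e_\cX$ and for the converse --- is exactly the paper's proof, except that you spell out several steps the paper dismisses as ``obvious''. Your verifications of conditions (1) and (2) for $e=e_\cX$ are correct and complete: the exclusion of the corner case $\beta\in\{\alpha,\gamma\}$ via irreflexivity and symmetry of $e_\cX$, and the appeal to the second part of Lemma~\ref{lm:twin_is_parabolic} to see that $(e_\cX)_\Delta\setminus 1_\Delta$ is a single basis relation, are precisely the paper's (2)-argument made explicit.

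However, the second branch of your dichotomy has a gap. When $s=\bR(\alpha,\gamma)\subseteq e$, condition (2) only speaks about $e_\Delta=e\cap(\Delta\times\Delta)$ for a fiber $\Delta$, so to conclude $\bR(\alpha,\gamma)=\bR(\beta,\gamma)=e_\Delta\setminus 1_\Delta$ you need $\alpha$, $\beta$, $\gamma$ to lie in a common fiber; nothing in the definition of a parabolic guarantees that a class of $e$ is contained in a fiber, and conditions (1)--(2) do not quite force it. You can almost repair this with condition (1): if $(\alpha,\gamma)\in e$ with $\alpha,\gamma$ in distinct fibers and some $\delta\notin\alpha e$ exists, then $s:=\bR(\alpha,\delta)$ is disjoint from $e$, and $(\gamma,\delta)\in e\cdot s=s$ forces $\gamma\in\Omega_{-}(s)$, contradicting that $\alpha$ and $\gamma$ lie in different fibers. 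This leaves exactly one escape, $e=\Omega\times\Omega$, for which (1) is vacuous --- and that case is genuinely problematic: for the coherent configuration on three points with fibers $\{1\}$ and $\{2,3\}$ (the orbit configuration of $\langle(2\,3)\rangle$), the parabolic $\Omega\times\Omega$ satisfies (1) vacuously and (2), yet strictly contains $e_\cX$. So both your maximality argument and your converse need this degenerate case addressed, and strictly speaking the statement only holds if condition (2) is read as also requiring $e=\bigcup_\Delta e_\Delta$. To be fair, the paper's own proof hides the same issue behind ``obviously'' and behind an unchecked hypothesis $s\cap e'=\varnothing$ in its maximality step, so the difficulty is inherited rather than introduced by you; but since you chose to make the dichotomy explicit, the missing case becomes visible and should be closed.
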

\begin{proof}
To prove the ``only if'' part, assume that $e=e_\cX$. Then Condition $(1)$ is obvious.
To prove that Condition~$(2)$ holds, suppose on the contrary that $e_{\Delta}\setminus 1_{\Delta}\not\in\sS(\cX)$.
Then there are two distinct irreflexive basis relations $r$ and $s$ such that $r\cup s\subseteq e$.
It follows that
$$
\rho_e(r)=\rho_e(e_\Delta)=\rho_e(s),
$$
which, by Lemma~\ref{lm:twin_is_parabolic}, implies that $r=s$, a contradiction.
Finally, to prove the maximality of~$e$, let $e\subsetneq e'$ for some parabolic $e'$ of~$\cX$.
Then there exist $\alpha$ and $\beta$ that are not $\cX$-twins but $(\alpha,\beta)\in e'$.
It follows that the relations $r=\boldsymbol{r}(\alpha,\gamma)$ and $s=\boldsymbol{r}(\beta,\gamma)$
are distinct for some point~$\gamma$. But then $e\cdot s\supset r\cup s\ne s$ and so Condition~$(1)$ is violated for $e'$.

To prove the ``if'' part, assume that $e$ is a maximal parabolic of $\cX$ satisfying Conditions~$(1)$ and~$(2)$.
Then obviously every two points $\alpha$ and $\beta$ such that $(\alpha,\beta)\in e$ are $\cX$-twins.
Consequently, $e\subseteq e_\cX$. Now the required statement follows from the maximality of~$e$.
\end{proof}

Every algebraic isomorphism preserves the inclusion between the relations, the dot product and parabolics.
Thus, the  following corollary is an immediate consequence of Lemma~\ref{lm:e=eX_iff}.

\begin{corollary}\label{coro:twinparabolicISO}
Let $\varphi \in \mathrm{Iso}_{\mathrm{alg}}(\kk{X},\kk{X}')$. Then $\varphi(e_{\cX})=e_{\kk{X}'}$.
\end{corollary}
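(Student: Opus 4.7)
The plan is to invoke the characterization of the twin parabolic given by Lemma~\ref{lm:e=eX_iff}, namely that $e_\cX$ is the unique maximal parabolic of $\cX$ satisfying Conditions~$(1)$ and~$(2)$ of that lemma. Setting $e':=\varphi(e_\cX)$, I would verify that $e'$ is a maximal parabolic of $\cX'$ satisfying the analogous two conditions; then the ``if'' direction of Lemma~\ref{lm:e=eX_iff} applied to $\cX'$ immediately yields $e'=e_{\cX'}$.

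First, $e'$ is a parabolic of $\cX'$, since (as noted in Section~\ref{ss:parabolics}) any algebraic isomorphism sends parabolics to parabolics. The bijection $\varphi\!:\sS(\cX)\to\sS(\cX')$ extends to $\sS^\cup$ and preserves inclusion, disjointness, unions, the dot product, the fibers, and the diagonals $1_\Delta\mapsto 1_{\Delta^\varphi}$. I would use these preservation properties to transport Conditions~$(1)$ and~$(2)$ from $e_\cX$ to $e'$. For~$(1)$, every basis relation $s'$ of $\cX'$ with $s'\cap e'=\varnothing$ equals $\varphi(s)$ for a basis relation $s$ of $\cX$ with $s\cap e_\cX=\varnothing$, so $e_\cX\cdot s=s$ yields $e'\cdot s'=s'$ after applying $\varphi$. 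For~$(2)$, for each fiber $\Delta'=\Delta^\varphi$ of $\cX'$ the alternative ``$e_\Delta=1_\Delta$'' transfers because $\varphi(1_\Delta)=1_{\Delta'}$, while the alternative ``$e_\Delta\setminus 1_\Delta\in\sS(\cX)$'' transfers because $\varphi$ maps this basis relation to $(e')_{\Delta'}\setminus 1_{\Delta'}$, which is then a basis relation of $\cX'$.

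Maximality follows by running the same transport in reverse: any parabolic $f'$ of $\cX'$ with $e'\subsetneq f'$ pulls back through $\varphi^{-1}$ to a parabolic $f$ of $\cX$ with $e_\cX\subsetneq f$; by the maximality asserted in Lemma~\ref{lm:e=eX_iff}, $f$ must violate Condition~$(1)$ or~$(2)$ for $\cX$, and the argument of the previous paragraph (now in the opposite direction) transports this violation to $f'$. I do not anticipate any real obstacle here; the content of Corollary~\ref{coro:twinparabolicISO} is essentially the observation that every ingredient of Lemma~\ref{lm:e=eX_iff} is phrased in terms of data preserved by algebraic isomorphisms, so the only mild care required is tracking the fibrewise clause of Condition~$(2)$ through the bijection $\Delta\mapsto\Delta^\varphi$ on fibers.
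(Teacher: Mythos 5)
Your proposal is correct and follows exactly the paper's route: the paper also derives the corollary as an immediate consequence of Lemma~\ref{lm:e=eX_iff}, noting that algebraic isomorphisms preserve inclusions, the dot product, and parabolics. You have merely spelled out the transport of Conditions~$(1)$,~$(2)$ and of maximality that the paper leaves implicit.
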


We complete the section by a statement showing a relationship between twins in graphs and in coherent configurations.

\begin{lemma}\label{lm:twins}
Let $X$ be a graph, $\pi$ a correct partition of $X$,
and $\kk{X}=\wl{X}_{\pi}$. Then
\begin{itemize}
  \item[$(1)$] every two $\kk{X}$-twins are twins in $X$;
  \item[$(2)$] every two twins in $X$ belonging to the same fiber of $\kk{X}$ are $\kk{X}$-twins.
\end{itemize}
\end{lemma}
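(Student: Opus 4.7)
My plan for part $(1)$ is as follows. The edge relation $E=E(X)$ lies in $\sS(\cX)^\cup$ by the very definition of $\wL(X)_\pi$, hence is a union of basis relations of~$\cX$. So if $\alpha,\beta$ are $\cX$-twins and $\gamma\in\Omega\setminus\{\alpha,\beta\}$, the equality $\boldsymbol{r}(\gamma,\alpha)=\boldsymbol{r}(\gamma,\beta)$ immediately forces $(\gamma,\alpha)\in E$ if and only if $(\gamma,\beta)\in E$. This is exactly the condition that $\gamma E$ contains both of $\alpha,\beta$ or neither, so $\alpha$ and $\beta$ are twins in~$X$. Notice that this direction does not use the correctness of $\pi$ at all.

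For part $(2)$, the natural approach is to exhibit the transposition $\sigma=(\alpha\,\beta)\in\sym(\Omega)$ as an element of $\aut(\cX)$. Since $\alpha,\beta$ are twins in $X$, this transposition preserves the edge set, whence $\sigma\in\aut(X)$. The correctness of $\pi$ yields $\sF(\cX)=\pi$, so $\alpha$ and $\beta$ lying in the same fiber of $\cX$ means they lie in the same class of~$\pi$; consequently $\sigma$ fixes every class of $\pi$ setwise, i.e.\ $\sigma\in\aut(X)_\pi$. Now Lemma~\ref{lm:aut} gives $\aut(X)_\pi=\aut(\cX)$, so $\sigma\in\aut(\cX)$. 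Since automorphisms of a coherent configuration preserve each basis relation, for every $\gamma\in\Omega\setminus\{\alpha,\beta\}$ we obtain
\[
\boldsymbol{r}(\gamma,\alpha)=\boldsymbol{r}(\gamma^\sigma,\alpha^\sigma)=\boldsymbol{r}(\gamma,\beta),
\]
establishing that $\alpha$ and $\beta$ are $\cX$-twins.

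I do not expect any real obstacle. The only delicate point is the role of the correctness hypothesis in part $(2)$: without it, two twins in $X$ might lie in different classes of $\pi$, the transposition would fail to belong to $\aut(X)_\pi$, and the bridge through Lemma~\ref{lm:aut} would break down. It is also worth noting why the fiber assumption in $(2)$ cannot be dropped: even when $\alpha,\beta$ are twins in $X$, if they lie in distinct fibers of $\cX$ then they cannot be $\cX$-twins at all, since $\cX$-twins always share a fiber.
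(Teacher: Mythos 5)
Your proof is correct and follows essentially the same route as the paper: part $(1)$ from $E(X)\in\sS(\cX)^\cup$, and part $(2)$ by realizing the transposition $(\alpha\,\beta)$ as an automorphism of $X$ fixing the classes of $\pi$ and hence, by Lemma~\ref{lm:aut}, an automorphism of $\cX$. (Only your closing remark slightly misattributes the role of correctness: the fibers of $\wL(X)_\pi$ always refine $\pi$, so two points in a common fiber lie in a common class of $\pi$ in any case; this does not affect the proof.)
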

\begin{proof}
  Part (1) follows by $E(X)\in \rel{\kk{X}}^{\cup}$. Let $\Omega$ be the vertex set of $X$, and let $\alpha,\beta$ be distinct twins in $X$. Then   the transposition $(\alpha,\beta)\in \mathrm{Sym}(\Omega)$ is an automorphism of $X$
  fixing pointwise the set $\Omega \setminus \{\alpha, \beta\}$ and permuting $\alpha$ and $\beta$.
  If $\alpha,\beta$ are contained in the same fiber of $\kk{X}$, then this transposition
  preserves the partition $\pi=\sF(\cX)$ of $\Omega$, hence it is an automorphism of $\kk{X}$ by Lemma~\ref{lm:aut}.
  This immediately proves Part (2).
\end{proof}

\section{Two operations}\label{sect:main}
In this section we consider two operations on graphs:  \df{removing a matching} and
\df{reducing twins}, which consist in removing subsets of vertices satisfying certain conditions.
In both cases, the resulting graph is an induced subgraph of the original one.
Our goal is to show how these operations affect the coherent configurations of graphs.
Throughout this section $X=(\Omega,E)$ is a graph.

\subsection{Matchings}
Let $\cX=(\Omega,\sS)$ be a coherent configuration.
A basis relation  $m\in\sS$ is called a  \df{matching} if $m$ is irreflexive and $n_{m^{}}=n_{m^*}=1$.
A unique point  in $\alpha m$, $\alpha\in\Omega_{-}(m)$, is simply denoted by $\alpha m$.
Note that a matching $m$ defines a bijection from $\Omega_{-}(m)$ to $\Omega_{+}(m)$.
Furthermore, one can see that if a relation $m\cdot s$ (similarly, $s\cdot m$) with $s\in\sS$ is nonempty,
then it is a basis one.

Suppose further that $E\in \sS^{\cup}$,
and let $m$ be a matching of $\cX$ such that $\Omega_{-}(m)\ne \Omega_{+}(m)$.
If, for any $\delta\in \Omega_{-}(m)$, the vertex $\delta m$ is a unique $X$-neighbor
of $\delta$ (i.e., the vertices of $\Delta$ are all pendant),
then $m$ is called a \df{pendant matching} of $\cX$.
If, for any $\delta\in \Omega_{-}(m)$, the vertex $\delta m$ is a twin of~$\delta$ in $X$,
then $m$ is called a \df{twin matching} of $\cX$.
It is more precise to speak about pendant or twin matchings of $\cX$ {\it with respect to the graph} $X$;
in what follows we omit $X$ if it is clear from the context.

\begin{proposition}\label{prop:matching} {\rm(Removing matching)}
Let $X$ be a graph, $\pi$ a correct partition of $X$,
$m$ a matching of $\wl{X}_{\pi}$, and $\Delta:=\Omega_{-}(m)$.
If $m$ is pendant or twin, then
\begin{equation}\label{eq-matchingprop}
  \wl{X}_{\pi}\setminus \Delta=\wl{X\setminus \Delta}_{\pi\setminus \Delta},  
\end{equation}
and $\pi\setminus\Delta$ is a correct partition of $X\setminus \Delta$.
\end{proposition}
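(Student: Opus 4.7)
The plan is to prove the equality \eqref{eq-matchingprop} by establishing both inclusions; once it holds, the correctness of $\pi\setminus\Delta$ is an immediate consequence, since $\Delta = \Omega_{-}(m)$ is a fiber of $\wL(X)_\pi$ (as $m$ is a basis relation), so the fiber set of the restriction $\wL(X)_\pi\setminus\Delta$ equals $\sF(\wL(X)_\pi)\setminus\{\Delta\} = \pi\setminus\Delta$ by correctness of $\pi$.

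The inclusion $\wL(X\setminus\Delta)_{\pi\setminus\Delta} \le \wL(X)_\pi\setminus\Delta$ follows directly from minimality of the coherent closure: the restriction $\wL(X)_\pi\setminus\Delta$ is a coherent configuration on $\Omega\setminus\Delta$ whose relations include $E(X\setminus\Delta) = E(X)\cap(\Omega\setminus\Delta)^2$ and each $1_{\Delta'}$ with $\Delta'\in\pi\setminus\Delta$ (the latter since $\pi$ is correct and $\Delta'$ is a fiber of $\wL(X)_\pi$).

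For the reverse inclusion, I would construct an extension $\cY^+ = (\Omega, \sS^+)$ of $\cY := \wL(X\setminus\Delta)_{\pi\setminus\Delta}$ by adjoining to the basis relations of $\cY$ the relation $1_\Delta$, the matching $m$, its transpose $m^*$, and all nonempty products $m\cdot s$, $s\cdot m^*$, and $m\cdot s\cdot m^*$ for $s\in\sS(\cY)$ whose source and/or target is $\Delta^+ := \Omega_{+}(m)$. Because $n_m=n_{m^*}=1$, each such product is a single basis relation rather than a proper union, and the bijection $\phi\!:\Delta\to\Delta^+$ induced by $m$ lets one express intersection numbers involving the new relations in terms of intersection numbers in $\cY$; this shows that $\cY^+$ is a coherent configuration. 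I would then verify $E(X)\in(\sS^+)^\cup$ using the hypothesis that $m$ is pendant or twin. If $m$ is pendant, then $E(X) = E(X\setminus\Delta)\cup m\cup m^*$. If $m$ is twin, then the twin property yields $(\alpha,\beta)\in E(X)\Leftrightarrow(\phi(\alpha),\beta)\in E(X)$ for $\alpha\in\Delta$ and $\beta\notin\{\alpha,\phi(\alpha)\}$, so the portion of $E(X)$ incident to $\Delta$ decomposes as $m\cdot(E(X\setminus\Delta)\cap(\Delta^+\times(\Omega\setminus\Delta)))$, together with $m\cdot(E(X\setminus\Delta)\cap(\Delta^+\times\Delta^+))\cdot m^*$ and, when the twins are true, the matching $m$ itself; all of these lie in $(\sS^+)^\cup$. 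Since $\cY^+$ also contains $1_{\Delta''}$ for every $\Delta''\in\pi$, minimality of the coherent closure gives $\wL(X)_\pi\le\cY^+$. Restricting to $\Omega\setminus\Delta$ and using the fact that the ``new'' basis relations of $\cY^+$ all meet $\Delta$, one obtains $\wL(X)_\pi\setminus\Delta\le \cY^+\setminus\Delta = \cY$, as required.

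The main obstacle is the verification that $\cY^+$ is a coherent configuration, that is, that the intersection number $c_{rs}^t$ is well-defined for every triple $r,s,t\in\sS^+$. The check splits into several cases according to how many of $r,s,t$ are ``new'' basis relations involving $\Delta$; the valency-1 property of $m$ ensures that composition with $m$ or $m^*$ acts as the bijective translation along $\phi$, so every intersection number reduces either to one already available in $\cY$ or to a trivial count. This analysis is lengthy but essentially mechanical.
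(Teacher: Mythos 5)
Your proposal is correct and follows essentially the same route as the paper: both establish $\wl{X\setminus\Delta}_{\pi\setminus\Delta}\le \wl{X}_{\pi}\setminus\Delta$ by minimality of the coherent closure, and both obtain the reverse inclusion by building on all of $\Omega$ the auxiliary configuration with basis relations $\sS'\cup m\cdot\sS'\cup\sS'\cdot m^{*}\cup m\cdot\sS'\cdot m^{*}$ (your $\cY^{+}$ is the paper's $\cX'$), checking coherence via the valency-one translation along $m$ and verifying $E(X)$ is a relation using the pendant/twin hypothesis. The case analysis you defer as ``mechanical'' is exactly the content of the paper's Claim 4.3.
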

\begin{proof}
Put
\[
\kk{X}:=(\Omega,\sS)=\wl{X}_{\pi},\quad Y=X\setminus\Delta,\quad
\kk{Y}:=(\Omega\setminus \Delta,\sS')=\wl{X\setminus \Delta}_{\pi\setminus \Delta},
\]
so that Eq. \eqref{eq-matchingprop} can be rewritten as $\cX\setminus\Delta=\cY$,
and we aim to prove this equality.

Since $\pi$ is a correct partition, each of the relations $1_\Gamma$, $\Gamma\in\pi\setminus\Delta$,
belongs to $\sS(\cX\setminus\Delta)^\cup$. As $E(Y)=E(X\setminus \Delta)\in \sS(\cX\setminus\Delta)^\cup$
and $\kk{Y}$ is the minimal coherent configuration containing $E(Y)$ and all $1_\Gamma$, $\Gamma\in\pi\setminus\Delta$,
among its relations, we have
  \begin{equation}\label{eq-XDeltaatleastY}
    \kk{X}\setminus \Delta\geq \kk{Y}.
  \end{equation}


  Let $\sT$ be the set of nonempty binary relations on $\Omega$ belonging to the set
  \begin{equation*}\label{eq-Crel}
    \sS'\cup m\cdot \sS'\cup \sS'\cdot m^*\cup m\cdot \sS'\cdot m^*.
  \end{equation*}
  One can see that $\sT$ is a partition of $\Omega\times \Omega$ that satisfies Eq. \eqref{eq-rainbow};
  this allows us to define an auxiliary rainbow $\cX'=(\Omega,\sT)$.
  Moreover, observe that 
  \begin{eqnarray}
    \sT_{\Delta,\Delta} &=& m\cdot \sS'\cdot m^*, \label{eq-Tparts1}\\
    \sT_{\Delta,\Omega\setminus\Delta} &=& \sT_{\Omega\setminus\Delta,\Delta}^* = m\cdot \sS', \label{eq-Tparts2}\\
    \sT_{\Omega\setminus\Delta,\Omega\setminus\Delta} &=& \sS'. \label{eq-Tparts3}
  \end{eqnarray}

\begin{claim}\label{cl:prop11}
Suppose that $\cX'$ is a coherent configuration and $E\in \sT^{\cup}$.
Then the conclusion of the proposition holds.
\end{claim}
\begin{proof}
  By $E\in \sT^{\cup}$, we see that $\cX'\geq \wl{X}$.
  Furthermore, the partition $\sF(\cX')$ of $\Omega$ is a refinement of $\pi=\sF(\cX)$,
  since $\sF(\cX')=\sF(\cY)\cup \{\Delta\}$ by the construction of $\cX'$ and
  $\sF(\cY)$ is a refinement of $\pi\setminus\Delta$ by the definition of $\cY$.
  This implies that $\cX'\geq \wl{X}_{\pi}=\cX$. Hence, we obtain
  \begin{equation}\label{eq-DgeqCA}
    \cX\setminus \Delta \leq \cX'\setminus \Delta=\cY,
  \end{equation}
  which, together with Eq. \eqref{eq-XDeltaatleastY}, yields $\cX\setminus \Delta=\cY$, as required.
  Moreover, then $\fiber{\cY}=\pi\setminus \Delta$ holds, i.e.,
  $\pi\setminus\ \Delta$ is a correct partition of the graph $X\setminus \Delta$.
\end{proof}

We need the following two claims.

\begin{claim}\label{cl:prop12}
$\cX'$ is a coherent configuration.
\end{claim}
\begin{proof}
We need to verify that the number
\[
a=|\alpha r\cap \beta s^{*}|
\]
does not depend on the choice of $(\alpha,\beta)\in t$ for all $r,s,t\in \sT$.
Let $c^{t'}_{r',s'}$ with $r',s',t'\in \sS'$ stand for the intersection numbers of $\cY$.

Eqs. \eqref{eq-Tparts1}--\eqref{eq-Tparts3} imply that $a=0$ for all $(\alpha,\beta)\in t$
or, up to replacing any of $r,s,t\in \sT$ by $r^*,s^*,t^*$, respectively,
one of the following cases holds:
\begin{itemize}
  \item $r,s,t\in \sS'$: here clearly $a=c^t_{r,s}$ holds;
  \item $r\in \sS'\cdot m^*$, $s\in m\cdot \sS'$ and $t\in \sS'$: here
  $r=r'\cdot m^*$, $s=m\cdot s'$ for some $r',s'\in \sS'$, and hence
  \[
  a=|\alpha r\cap \beta s^{*}|=|\alpha (r'\cdot m^*)\cap \beta (s'^*\cdot m^*)|=|\alpha r'\cap \beta s'^*|=c^t_{r',s'};
  \]
  \item $r\in m\cdot \sS'$, $s\in \sS'$ and $t\in m\cdot \sS'$: here
  $r=m\cdot r'$, $t=m\cdot t'$ for some $r',t'\in \sS'$, and hence
  \[
  a=|\alpha (m\cdot r')\cap \beta s^*|=|(\alpha m) r'\cap \beta s^*|=c^{t'}_{r',s};
  \]
  \item $r\in m\cdot \sS'\cdot m^*$, $s\in m\cdot \sS'$ and $t\in m\cdot \sS'$: here
  $r=m\cdot r'\cdot m^*$, $s=m\cdot s'$, $t=m\cdot t'$ for some $r',s',t'\in \sS'$, and hence
  \[
  a=|\alpha (m\cdot r'\cdot m^*)\cap \beta (s'^*\cdot m^*)|=|(\alpha m) r'\cap \beta s'^*|=c^{t'}_{r',s'};
  \]
  \item $r\in m\cdot \sS'$, $s\in \sS'\cdot m^*$ and $t\in m\cdot \sS'\cdot m^*$: here
  $r=m\cdot r'$, $s=s'\cdot m^*$, $t=m\cdot t'\cdot m^*$ for some $r',s',t'\in \sS'$, and hence
  \[
  a=|\alpha (m\cdot r')\cap \beta (m\cdot s'^*)|=|(\alpha m) r'\cap (\beta m) s'^*|=c^{t'}_{r',s'};
  \]
  \item $r,s,t\in m\cdot \sS'\cdot m^*$: here
  $r=m\cdot r'\cdot m^*$, $s=m\cdot s'\cdot m^*$, $t=m\cdot t'\cdot m^*$ for some $r',s',t'\in \sS'$,
  and, as above, $a=c^{t'}_{r',s'}$ holds.
\end{itemize}

Thus, in any case the number $a$ does not depend on the choice of $(\alpha,\beta)\in t$,
and we are done.
\end{proof}

\begin{claim}\label{cl:prop13}
$E\in \sT^{\cup}$ holds.
\end{claim}
\begin{proof}
  We first note that $E_{\Omega\setminus\Delta}=E(Y)\in \sT^{\cup}$ holds,
  since $E(Y)\in (\sS')^{\cup}$ by the definition of $\cY$ and $\sS'\subseteq \sT$ by the definition of $\cX'$.
  If $m$ is a pendant matching, then obviously $E=m\cup m^*\cup E(Y)$, so we are done by $m\in \sT$.

  Suppose that $m$ is a twin matching.
  Choose an arbitrary $s\in \sT$ with $\Omega_{-}(s)=\Omega_{+}(m)$.
  Observe that, for any $\delta\in \Delta$, $\gamma:=\delta m$, and $\beta\in \gamma s$,
  we have that $\boldsymbol{r}_{\cX'}(\delta,\beta)=m\cdot s$.
  As $\delta$ and $\gamma$ are twins in $X$, i.e., $(\gamma,\beta)\in E \Leftrightarrow (\delta,\beta)\in E$,
  we see that
  \begin{equation}\label{eq1}
    s\subseteq E\ \Leftrightarrow\ m\cdot s\subseteq E\qquad\text{and}\qquad s\cap E=\varnothing\ \Leftrightarrow\  (m\cdot s)\cap E=\varnothing.
  \end{equation}

  If $\Omega_{+}(s)\subseteq \Omega\setminus \Delta$, then $s\in \sS'$ by Eq. \eqref{eq-Tparts3}.
  Since $s$ is a basis relation of $\cY$ and $E(Y)\subseteq E$, it follows that
  $(s\cap E\ne \varnothing \Rightarrow s\subseteq E)$ holds.
  By Eq. \eqref{eq1}, we obtain
  \begin{equation}\label{eq2}
s'\in \sS',\ (m\cdot s')\cap E\ne \varnothing\qquad \Rightarrow\qquad m\cdot s'\subseteq E.
  \end{equation}

  If $\Omega_{+}(s)=\Delta$, then $s=s'\cdot m^*$ for $s'=\boldsymbol{r}_{\cX'}(\gamma,\beta m)$, $s'\in \sS'$ by Eq. \eqref{eq-Tparts1}.
  As $\beta$ and $\beta m$ are twins in $X$, we see that
  \begin{equation}\label{eq3}
s\subseteq E\ \Leftrightarrow\ s'\subseteq E\qquad\text{and}\qquad s\cap E=\varnothing\ \Leftrightarrow\ s'\cap E=\varnothing.
  \end{equation}
  As above, 
  it follows that
  $(s'\cap E\ne \varnothing \Rightarrow s'\subseteq E)$ holds.
  By Eqs. \eqref{eq1}, \eqref{eq3}, 
  this implies that
  \begin{equation}\label{eq4}
s''\in \sS',\ (m\cdot s''\cdot m^*)\cap E\ne \varnothing\qquad \Rightarrow\qquad m\cdot s''\cdot m^*\subseteq E.
  \end{equation}

  By Eqs. \eqref{eq2} 
  and \eqref{eq4}, we conclude that
  \[
     E_{\Delta,\Omega}\in (m\cdot \sS'\cup m\cdot \sS'\cdot m^*)^{\cup}\subseteq \sT^{\cup},
  \]
  so that $E=E_{\Delta,\Omega}\cup E_{\Delta,\Omega}^*\cup E_{\Omega\setminus\Delta}\in \sT^{\cup}$ holds,
  as required.
%
\end{proof}

  Claims \ref{cl:prop12} and \ref{cl:prop13} show that $\cX'$ satisfies the assumption of Claim \ref{cl:prop11},
  whence the proposition follows.
\end{proof}

\begin{proposition}\label{prop:matching_separability}
  In the notation of Proposition \ref{prop:matching},
  $\wl{X}_{\pi}\setminus \Delta$ is separable
  if and only if
  $\wl{X}_{\pi}$ is separable.
\end{proposition}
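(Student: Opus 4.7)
The plan is to use the decomposition
\[
\sS(\cX) \;=\; \sS(\cY) \;\cup\; m\cdot\sS(\cY) \;\cup\; \sS(\cY)\cdot m^* \;\cup\; m\cdot\sS(\cY)\cdot m^*
\]
of the basis relations of $\cX:=\wl{X}_\pi$ over those of $\cY:=\cX\setminus\Delta=\wl{X\setminus\Delta}_{\pi\setminus\Delta}$; this is the content of Claims~\ref{cl:prop11}--\ref{cl:prop13} inside the proof of Proposition~\ref{prop:matching}, which show that the auxiliary rainbow $\cX'$ built there coincides with $\cX$. Given this decomposition, separability transfers in both directions by moving algebraic isomorphisms between $\cX$ and $\cY$ through restriction and extension by a matching.

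Suppose first that $\cY$ is separable, and let $\varphi:\cX\to\cX'$ be an arbitrary algebraic isomorphism, with $\cX'$ having point set $\Omega'$. Algebraic isomorphisms preserve valencies, irreflexivity and fibers, so $m':=\varphi(m)$ is a matching of $\cX'$ with $\Omega_{-}(m')=\Delta':=\Delta^\varphi$ and $\Omega_{+}(m')=(\Omega_{+}(m))^\varphi$. Put $\cY':=\cX'\setminus\Delta'$: the basis relations of $\cY$ are exactly those basis relations of $\cX$ lying in $(\Omega\setminus\Delta)^2$, and $\varphi$ sends them to the basis relations of $\cY'$, so $\varphi$ restricts to an algebraic isomorphism $\psi:\cY\to\cY'$. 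By hypothesis, $\psi$ is induced by a combinatorial isomorphism $g:\Omega\setminus\Delta\to\Omega'\setminus\Delta'$. Extend $g$ to $f:\Omega\to\Omega'$ by declaring, for each $\alpha\in\Delta$, $f(\alpha)$ to be the unique point of $\Delta'$ that $m'$ sends to $g(\alpha m)$. Going through the four types of basis relations of $\cX$ and using that $g$ induces $\psi$, one checks $s^f=\varphi(s)$ for every $s\in\sS(\cX)$; hence $f$ induces $\varphi$ and $\cX$ is separable.

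Conversely, suppose $\cX$ is separable, and let $\psi:\cY\to\cY'$ be an arbitrary algebraic isomorphism, with $\cY'$ having point set $\Omega'_0$. Set $(\Delta^*)':=(\Omega_{+}(m))^\psi$; choose a set $\Delta'$ of the same cardinality as $(\Delta^*)'$ disjoint from $\Omega'_0$, fix a bijection $h:(\Delta^*)'\to\Delta'$, and put $m':=\{(h(\beta),\beta):\beta\in(\Delta^*)'\}$ and $\Omega':=\Omega'_0\cup\Delta'$. Define
\[
\sT' \;:=\; \sS(\cY')\cup m'\cdot\sS(\cY')\cup \sS(\cY')\cdot(m')^*\cup m'\cdot\sS(\cY')\cdot(m')^*.
\]
The case analysis of Claim~\ref{cl:prop12} applies verbatim, with $\cY',m'$ in place of $\cY,m$, and shows that $\cX':=(\Omega',\sT')$ is a coherent configuration whose intersection numbers all reduce to those of $\cY'$. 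Define $\varphi:\sS(\cX)\to\sT'$ on each of the four summands of $\sS(\cX)$ by transporting $\psi$ and replacing $m$ by $m'$; the same case analysis shows that $\varphi$ preserves intersection numbers, so it is an algebraic isomorphism $\cX\to\cX'$. By separability of $\cX$, $\varphi$ is induced by a combinatorial isomorphism $f:\Omega\to\Omega'$. Since $\varphi(1_\Delta)=1_{\Delta'}$ we have $f(\Delta)=\Delta'$, and then $g:=f|_{\Omega\setminus\Delta}$ is a bijection onto $\Omega'_0$ that induces $\psi$, showing $\cY$ is separable.

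The only genuine computation is the verification of intersection numbers for the $\varphi$ constructed in the converse direction, but this repeats the case analysis of Claim~\ref{cl:prop12} verbatim, so I anticipate no conceptual difficulty. I note that the hypothesis that $m$ be a pendant or twin matching enters only through Proposition~\ref{prop:matching} (to identify $\cX\setminus\Delta$ with $\wl{X\setminus\Delta}_{\pi\setminus\Delta}$); once the decomposition of $\sS(\cX)$ is in hand, the separability argument itself is purely at the level of coherent configurations.
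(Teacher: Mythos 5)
Your proof is correct. The paper itself does not argue this proposition at all: it is dispatched in one line by citing \cite[Lemma~3.3(1)]{FKV2019}, a general statement about a coherent configuration containing a matching between two distinct fibers. What you have written is, in effect, a self-contained proof of that cited lemma, built on the decomposition $\sS(\cX)=\sS(\cY)\cup m\cdot\sS(\cY)\cup\sS(\cY)\cdot m^{*}\cup m\cdot\sS(\cY)\cdot m^{*}$ made available by the proof of Proposition~\ref{prop:matching}. The forward direction (restrict $\varphi$ to $\cY$, realize the restriction by $g$, extend $g$ across the matching) and the converse (attach a formal new fiber $\Delta'$ and matching $m'$ to an arbitrary target $\cY'$ of $\cY$, extend $\psi$ to $\varphi$, then restrict the inducing bijection) are exactly the standard arguments, and the intersection-number bookkeeping does reduce to the case analysis of Claim~\ref{cl:prop12} as you say; so your route buys self-containedness at the cost of repeating a computation the paper outsources. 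Two small points you should make explicit rather than leave implicit: (i) Claims~\ref{cl:prop11}--\ref{cl:prop13} literally establish $\cX'\ge\cX$ and $\cX\setminus\Delta=\cY$; the equality $\cX'=\cX$, i.e.\ your four-part decomposition of $\sS(\cX)$, additionally uses that every element of $\sT$ is a relation of $\cX$ (because $\sS(\cY)=\sS(\cX\setminus\Delta)\subseteq\sS(\cX)$ once $\cX\setminus\Delta=\cY$ is known, and dot products of relations of a coherent configuration are again relations), whence $\sT^{\cup}=\sS(\cX)^{\cup}$ and $\sT=\sS(\cX)$; (ii) in the converse direction, well-definedness and bijectivity of your $\varphi$ need the one-line observation that $s'\mapsto m\cdot s'$ is injective on the basis relations $s'$ with $\Omega_{-}(s')=\Omega_{+}(m)$, since $m^{*}\cdot m=1_{\Omega_{+}(m)}$. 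Neither is a gap. Your closing remark is also accurate: the pendant/twin hypothesis enters only through Proposition~\ref{prop:matching}, and the separability transfer is a purely configuration-theoretic fact --- which is precisely why the paper could delegate it to \cite{FKV2019}.
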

\begin{proof}
  The result follows from \cite[Lemma 3.3(1)]{FKV2019}.
\end{proof}

\subsection{Twins}
Let $\pi$ be a correct partition of the graph $X$ and $e$ the twin parabolic of $\wl{X}_{\pi}$.
Recall (see Section~\ref{ss:parabolics}) that $\pi/e:=\rho_e(\pi)$, where $\rho_e$ is the mapping
defined by Eq. \eqref{eq-ro}. Since $\rho_e$ preserves the fibers,
$\pi/e$ is the set of fibers of $\qo{\wl{X}_{\pi}}$.
The next proposition is an analogue of Proposition~\ref{prop:matching}
in regard to twins in a graph.

\begin{proposition}\label{prop:quot} {\rm (Reducing twins)}
Let $X$ be a graph, $\pi$ a correct partition of $X$,
and $e$ the twin parabolic of $\wl{X}_{\pi}$.
Then $e$ is a twin equivalence of $X$,
\begin{equation}\label{eq-twinprop}
  \qo{\wl{X}_{\pi}}=\wl{\qo{X}}_{\pi/e},
\end{equation}
and $\pi/e$ is a correct partition of $X/e$.
\end{proposition}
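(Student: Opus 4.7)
The plan is to establish three things: (a) $e$ is a twin equivalence of $X$; (b) the main identity $\wl{X}_\pi/e = \wl{X/e}_{\pi/e}$; and (c) $\pi/e$ is a correct partition of $X/e$. Claim (a) is immediate: each pair in the twin parabolic $e$ of $\cX := \wl{X}_\pi$ consists of $\cX$-twins, which by Lemma~\ref{lm:twins}(1) are twins in $X$, so $e$ is contained in the $X$-twin equivalence. Writing $\cY := \wl{X/e}_{\pi/e}$, claim (c) will follow from (b) since $\sF(\cX/e) = \rho_e(\sF(\cX)) = \pi/e$, whence $\sF(\cY) = \pi/e$.

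For (b) I prove the two inequalities $\cY \leq \cX/e$ and $\cY \geq \cX/e$. The easier direction $\cY \leq \cX/e$: the coherent configuration $\cX/e$ on $\Omega/e$ has fibers $\pi/e$ and contains $E(X/e)$, because $E(X) \in \sS(\cX)^\cup$ gives $\rho_e(E(X)) \in \sS(\cX/e)^\cup$, and $E(X/e)$ is obtained from $\rho_e(E(X))$ by removing a subset of $1_{\Omega/e}$ (the loops at $e$-classes that carry an intra-class edge), hence it lies in $\sS(\cX/e)^\cup$ as well. Minimality of $\cY$ then yields $\cY \leq \cX/e$.

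The reverse direction $\cX/e \leq \cY$ is the technical heart, and I handle it by lifting $\cY$ to a coherent configuration $\cZ$ on $\Omega$ satisfying $\cZ/e = \cY$ and $\cZ \geq \cX$; then $\cY = \cZ/e \geq \cX/e$. I define the basis relations of $\cZ$ to be: (i) the diagonals $1_{\tilde F}$ for each $F \in \sF(\cY)$, where $\tilde F := \rho_e^{-1}(F)$; (ii) the relation $(e \cap (\tilde F \times \tilde F)) \setminus 1_{\tilde F}$ whenever $|\alpha e| \geq 2$ for $\alpha \in \tilde F$; and (iii) the pullback $\rho_e^{-1}(u)$ for each off-diagonal $u \in \sS(\cY)$. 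Since $\sF(\cY)$ refines $\pi/e$, each $\tilde F$ sits inside a single fiber $\Delta \in \pi$ of $\cX$, so all $e$-classes in $\tilde F$ share a common cardinality $k_F$ by~\eqref{220320a}. A case analysis then shows that for $r, s, t \in \sS(\cZ)$ and $(\alpha, \beta) \in t$, the number $|\alpha r \cap \beta s^*|$ equals a structure constant of $\cY$ multiplied by $k_F$ or $k_F - 1$, so $\cZ$ is a coherent configuration; the collapsing behaviour of $\rho_e$ on the three types makes $\cZ/e = \cY$ transparent.

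It remains to verify $\cZ \geq \cX$, for which it suffices to show $E(X), 1_\Delta \in \sS(\cZ)^\cup$ for $\Delta \in \pi$. The decomposition $1_\Delta = \bigsqcup_{F \in \sF(\cY),\ F \subseteq \Delta/e} 1_{\tilde F}$ handles the diagonals. For $E(X)$, the twin-equivalence property of $e$ yields $E(X) \setminus e = \rho_e^{-1}(E(X/e))$, a union of type (iii) relations. For the intra-class part $E(X) \cap e$, Lemma~\ref{lm:e=eX_iff}(2) tells us that for each $\Delta \in \sF(\cX)$ the set $e_\Delta \setminus 1_\Delta$ is either empty or a single basis relation of $\cX$, hence either contained in or disjoint from $E(X)$; consequently every $\tilde F \subseteq \Delta$ has its $e$-classes either all cliques or all independent in $X$, so $E(X) \cap (\tilde F \times \tilde F) \cap e$ is either the type (ii) relation of $\tilde F$ or empty. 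Summing over $\tilde F$ yields $E(X) \in \sS(\cZ)^\cup$. The principal obstacle is the intersection-number case analysis confirming that $\cZ$ is a coherent configuration, which is routine but notationally heavy; the key structural input is that $e$-classes within a common fiber of $\cX$ are all of the same \emph{clique/independent} type, which is a consequence of Lemma~\ref{lm:e=eX_iff}(2).
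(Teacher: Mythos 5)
Your proposal is correct and follows essentially the same route as the paper: the lift $\cZ$ you build on $\Omega$ (fiber diagonals, intra-class twin relations, and $\rho_e^{-1}$-pullbacks of the irreflexive basis relations of $\wl{X/e}_{\pi/e}$) is exactly the paper's auxiliary rainbow $\cX'$, because $\sF(\wl{X/e}_{\pi/e})$ in fact equals $\pi/e$, so your sets $\tilde F$ coincide with the fibers of $\wl{X}_{\pi}$, and the surrounding logic (easy inequality by minimality, hard inequality via $\cZ\geq\cX$ and $\cZ/e=\cY$) is the same. The only caveat is that your deferred intersection-number analysis also yields the values $k_F-2$, $1$ and $0$ (not only $k_F\cdot c$ and $k_F-1$), exactly as in the paper's displayed case formula, but this does not affect the argument.
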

\begin{proof}
The statement about $e$ being a twin equivalence of the graph $X$
follows from Lemma~\ref{lm:twins}. Next, we put
\[
\kk{X}:=(\Omega,\sS)=\wl{X}_{\pi},\quad Y=\qo{X},\quad \kk{Y}:=(\Omega/e,\overline{\sT})=\wl{Y}_{\pi/e},
\]
so that Eq. \eqref{eq-twinprop} can be rewritten as $\cX/e=\cY$,
and we aim to prove this equality.

By the definition of $\cX/e$ (see Section \ref{ss:parabolics}), it follows that
$E(Y)=\rho_e(E)\in \sS(\cX/e)^{\cup}$ and $1_{\Delta}\in \sS(\cX/e)^{\cup}$ for every $\Delta\in\pi/e$.
Since $\cY$ is the minimal coherent configuration containing $E(Y)$ and all $1_\Delta$, $\Delta\in\pi/e$,
among its relations, we have
  \begin{equation}\label{eq-XeatleastY}
   \cX/e\geq \cY.
  \end{equation}

  Put $\rho:=\rho_e$ and define a set $\sT$ of binary relations on $\Omega$ as the union
\[
\{\rho^{-1}(\ov t)\!:\ov t\in\overline{\sT}\ \,\text{is irreflexive}\}\quad\cup\quad
\{e_\Delta\setminus 1_\Delta\!:\ \Delta\in\sF(\cX)\ \,\text{with}\ \, e_{\Delta}\ne 1_{\Delta}\}\quad\cup\quad
\{1_\Delta\!:\ \Delta\in\sF(\cX)\}.
\]
  One can see that $\sT$ is a partition of $\Omega\times \Omega$ that satisfies Eq. \eqref{eq-rainbow};
  this allows us to define an auxiliary rainbow $\cX'=(\Omega,\sT)$.
  Moreover, observe that $e\in\sT^{\cup}$ is a parabolic of $\cX'$.

%
%

\begin{claim}\label{cl:prop21}
Suppose that $\cX'$ is a coherent configuration and $E\in \sT^{\cup}$.
Then the conclusion of the proposition holds.
\end{claim}
\begin{proof}
  By $E\in \sT^{\cup}$, we see that $\cX'\geq \wl{X}$.
  Obviously, $\sF(\cX')$ is a refinement of $\pi=\sF(\cX)$ and hence $\cX'\geq \wl{X}_{\pi}=\cX$.
  Furthermore,  since $\rho(t)\in \overline{\sT}$ for all $t\in \sT$, $\cX'/e=\cY$ holds.
  Therefore, by the monotonicity of $\leq$ (see Section \ref{ss:closure}), we obtain
  \begin{equation}\label{eq-DgeqCA}
    \cX/e\leq \cX'/e=\cY,
  \end{equation}
  which, together with Eq. \eqref{eq-XeatleastY}, yields $\cX/e=\cY$, as required.
  Moreover, then $\fiber{\cY}=\pi/e$ holds, i.e.,
  $\pi/e$ is a correct partition of the graph $X/e$.
\end{proof}

We need the following two claims.

\begin{claim}\label{cl:prop22}
$\cX'$ is a coherent configuration.
\end{claim}
\begin{proof}
It suffices to verify that the number
\[
a=|\alpha r\cap \beta s^{*}|
\]
does not depend on the choice of $(\alpha,\beta)\in t$ for all $r,s,t\in \sT$.  To this end,
we set $\ov\delta=\rho(\delta)$ for every $\delta\in\Omega$, and note
that $\ov r=\rho(r)$, $\ov s=\rho(s)$, and $\ov t=\rho(t)$ are basis relations of~$\cY$.
Therefore,
\[
|\ov\alpha \ov r\cap \ov\beta \ov s^{*}|=c^{\ov{t}}_{\ov{r}\ov{s}}.
\]
and this number does not depend on $(\alpha,\beta)\in t$. Furthermore, if $a=0$ for some $(\alpha,\beta)\in t$, then
$c^{\ov{t}}_{\ov{r}\ov{s}}=0$, because $e\subseteq e_{\cX'}$ (see the definition of $\sT$).
But then obviously $a=0$ for all $(\alpha,\beta)\in t$.
Thus, without loss of generality, we may assume that $a\ne 0$.

Let $\gamma\in \alpha r\cap \beta s^{*}$. It is easily seen that  the points $\ov\gamma \in \ov\alpha \ov{r}\cap \ov\beta \ov{s}^{*}$
belong to the $\rho$-image of the fiber of~$\cX$, containing~$\gamma$. By Eq.~\eqref{220320a}, this implies that
the number $k=|\gamma e|$ does not depend on $\gamma\in \alpha r\cap \beta s^{*}$.
It follows that
$$
a=\css
kc^{\overline{t}}_{\overline{r},\overline{s}}  &\text{ if $r\not\subseteq e\ \wedge\ s\not\subseteq e$},\\
k-1  &\text{ if $(r\not\subseteq e\ \wedge\ s\subseteq e')\ \vee \ (r\subseteq e'\ \wedge\ s\not\subseteq e)$},\\ 
k-2  &\text{ if $r\subseteq e'\ \vee \ s\subseteq e'\ \vee \ t\subseteq e'$},\\ 
1  &\text{ otherwise},\\
\ecss
$$
where $e'=e\setminus 1_\Omega$; here, we made use the fact that if $x\in \sT$ is contained in $e$, then $x\subseteq e'$ or $x\subseteq 1_\Omega$. Thus, the number $a$ does not depend on the choice of $(\alpha,\beta)\in t$, as required.
\end{proof}

\begin{claim}\label{cl:prop23}
$E\in \sT^{\cup}$ holds.
\end{claim}
\begin{proof}
It suffices to verify that $E$ contains each irreflexive $t\in \sT$ such that $t\cap E\ne\varnothing$.
Note that the latter condition implies that $\rho(t)\cap \rho(E)\ne\varnothing$. Moreover,  $\rho(t)$
is a basis relation of $\cY$ and $\rho(E)$ is a relation of $\cY$. Thus, $\rho(t)\subseteq \rho(E)$
and hence
\[
\rho^{-1}(\rho(t))\subseteq \rho^{-1}(\rho(E)).
\]
On the other hand, the relation $\rho^{-1}(\rho(t))$ is equal to $t$ if $\ov t$ is irreflexive,
or $e_\Delta$ for some $\Delta\in\sF(\cX)$ otherwise. In any case, $t\subseteq \rho^{-1}(\rho(t))$. Furthermore,
\[
\rho^{-1}(\rho(E))\subseteq E\cup 1_\Omega,
\]
because $e$ is a twin equivalence of $X$ (see above). Thus,
\[
t\subseteq \rho^{-1}(\rho(t))\subseteq \rho^{-1}(\rho(E))\subseteq E\cup 1_\Omega.
\]
Since $t$ is irreflexive, this implies that $t\subseteq E$, as required.
\end{proof}

  Claims \ref{cl:prop22}, \ref{cl:prop23} show that $\cX'$ satisfies the assumption of Claim \ref{cl:prop21},
  whence the proposition follows.
\end{proof}

The following proposition, which holds for any coherent configuration, together
with Proposition~\ref{prop:quot} show that $\wl{X}_{\pi}$ is separable if $\wl{\qo{X}}_{\pi/e}$ is separable,
and this fact will be used in the proof of Theorem \ref{theo:main} in Section \ref{sect:proof}.

\begin{proposition}\label{prop:quot_separability}
  A coherent configuration $\kk{X}$ is separable if $\kk{X}/\eqv$ is separable.
\end{proposition}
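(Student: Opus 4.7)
The plan is to lift an arbitrary algebraic isomorphism $\varphi : \cX \to \cX'$ from the quotient back to $\cX$ using the structure of the twin parabolic. Write $\cX = (\Omega, \sS)$, $\cX' = (\Omega', \sS')$, $e = e_{\cX}$; by Corollary~\ref{coro:twinparabolicISO}, $e' := \varphi(e) = e_{\cX'}$, so $\varphi$ descends to an algebraic isomorphism $\bar\varphi : \cX/e \to \cX'/e'$, which by the separability hypothesis is induced by some combinatorial isomorphism $\bar f : \Omega/e \to \Omega'/e'$. For each $e$-class $C \subseteq \Omega$, the image $\bar f(C)$ is an $e'$-class of the same cardinality by Eq.~\eqref{220320a}; I pick any bijection $f|_C : C \to \bar f(C)$ and take $f$ to be their union. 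Then $f : \Omega \to \Omega'$ is a bijection, and since $\bar f$ respects fibers (in agreement with how $\varphi$ permutes them), $f$ restricts to a bijection $\Delta \to \Delta^\varphi$ on each fiber $\Delta \in \sF(\cX)$.

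It then remains to verify that $r^f = \varphi(r)$ for every $r \in \sS$. By Lemma~\ref{lm:e=eX_iff}, every such $r$ falls into one of three types: $r = 1_\Delta$, $r = e_\Delta \setminus 1_\Delta$, or $r \cap e = \varnothing$. The first case is immediate from $f(\Delta) = \Delta^\varphi$. For the second, any $(\alpha, \beta) \in r$ has distinct images $f(\alpha), f(\beta)$ lying in the common $e'$-class $\bar f(\alpha e) = \bar f(\beta e) \subseteq \Delta^\varphi$, so $(f(\alpha), f(\beta)) \in e'_{\Delta^\varphi} \setminus 1_{\Delta^\varphi} = \varphi(r)$.

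The main obstacle will be the third case $r \cap e = \varnothing$. Passing to the quotient only yields $(\bar f(\alpha e), \bar f(\beta e)) \in \rho_{e'}(\varphi(r))$, which guarantees the existence of \emph{some} pair $(\alpha'', \beta'') \in \varphi(r)$ with $\alpha'' \in \bar f(\alpha e)$ and $\beta'' \in \bar f(\beta e)$, not that one may take $\alpha'' = f(\alpha)$ and $\beta'' = f(\beta)$. To close the gap I would use that $\varphi$ preserves intersections, so $\varphi(r) \cap e' = \varphi(r \cap e) = \varnothing$; condition~(1) of Lemma~\ref{lm:e=eX_iff} applied to $\varphi(r)$ and to $\varphi(r)^*$ in $\cX'$ then yields $e' \cdot \varphi(r) = \varphi(r) = \varphi(r) \cdot e'$, which permits swapping both coordinates within their $e'$-classes to produce $(f(\alpha), f(\beta)) \in \varphi(r)$. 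A counting argument, or applying the same construction to $\varphi^{-1}$, upgrades the resulting inclusion $r^f \subseteq \varphi(r)$ to equality.
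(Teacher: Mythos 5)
Your proposal is correct and follows essentially the same route as the paper: lift the bijection inducing $\ov\varphi$ on $\Omega/e$ to $\Omega$ by choosing arbitrary bijections between corresponding twin classes (of equal size by Eq.~\eqref{220320a}), and then use the twin property to check that the lift induces $\varphi$. The paper packages the verification via a transversal of the $e$-classes rather than your trichotomy of basis relations plus the closing counting step, and invokes the $\cX'$-twin relation directly where you invoke condition~(1) of Lemma~\ref{lm:e=eX_iff} for $e'$, but these are the same mechanism.
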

\begin{proof}
Let $\kk{X}=(\Omega,\mathsf{S})$ and $e:=\eqv$.   Assume that the coherent configuration $\qo{\kk{X}}$ is separable.
We need to verify that given a coherent configuration $\kk{X}'=(\Omega',\mathsf{S}')$, any algebraic isomorphism $\varphi:s\mapsto s'$
from $\kk{X}$ to $\kk{X}'$ is induced by a bijection. We note that by Corollary~\ref{coro:twinparabolicISO},
$e':=\varphi(e)$ is the twin parabolic of $\kk{X}'$.

Given $\Delta\in \fiber{\kk{X}}$, we choose a full system $\overline{\Delta}$ of distinct representatives of
the classes of the equivalence relation $e_\Delta$,  and put $\overline{\Omega}$ to be the union of
all $\overline{\Delta}$, $\Delta\in \fiber{\kk{X}}$. Then the pair $\overline{\kk{X}}=(\overline{\Omega},\overline{\mathsf{S}})$
with  $\overline{\mathsf{S}}=\mathsf{S}_{\overline{\Omega}}$ is obviously a rainbow.
Since the parabolic $e$ is twin, the natural bijection
$$
\ov h\!:\ov\Omega\to \Omega/e, \quad \alpha\mapsto \alpha e
$$
is a rainbow isomorphism from $\overline{\kk{X}}$ to $\cX/e$ (see the second part of Lemma~\ref{lm:twin_is_parabolic}).
In particular, $\overline{\kk{X}}$ is a coherent configuration.
In a similar way, one can define the sets $\overline{\Delta'}$, $\Delta'\in \fiber{\kk{X}'}$, and $\overline{\Omega'}$,
the rainbow $\overline{\cX'}$, the isomorphism
$$
\overline{h'}\!:\overline{\Omega'}\to \Omega'/e', \quad \alpha'\mapsto \alpha' e',
$$
and check that $\overline{\cX'}$ is a coherent configuration.

The algebraic isomorphism $\varphi$ induces an algebraic isomorphism $\ov\varphi\in\Aiso(\cX/e,\cX'/e')$ (see Subsection~\ref{ss:parabolics}).
By the proposition assumption, the coherent configuration $\cX/e$ is separable. Consequently,~$\ov\varphi$ is induced by a bijection, say $h$.
It follows that the composition mapping $\ov f=\ov h\circ h\circ (\overline{h'})^{-1}$ induces the restriction of~$\varphi$ to $\overline{\mathsf{S}}$, i.e.,
\qtnl{a1}
\boldsymbol{r}(\alpha,\beta)'=\boldsymbol{r}(\alpha^{\ov f},\beta^{\ov f}),\qquad \alpha,\beta\in\ov\Omega.
\eqtn

Let us extend $\overline{f}$ to a bijection $f\!:\Omega\to\Omega'$.   To do so given $\alpha\in\ov\Omega$,
we choose an arbitrary bijection $f_\alpha\!:\alpha e\to \alpha'e$ that takes $\alpha$ to $\alpha'=\alpha^{\ov f}$ (
such a bijection does exist because $|\alpha e|=|\alpha'e'|$ in view of~\eqref{a1}). Since the union of $\alpha e$, $\alpha\in\ov\Omega$,
equals $\Omega$, the desired bijection $\ov f$ is defined uniquely by the condition $f|_{\alpha e}=f_\alpha$.

To complete the proof it suffices to verify that $\boldsymbol{r}(\alpha,\beta)^{f}=\boldsymbol{r}(\alpha,\beta)'$
for all $\alpha,\beta\in\Omega$.   Denote by $\ov\alpha$ and $\ov\beta$ the unique points of $\ov\Omega$, lying
in $\alpha e$ and $\beta e$, respectively. Then $\alpha$ and $\beta$ are $\cX$-twins of $\ov\alpha$ and $\ov \beta$, respectively.
Moreover, from the definition of $f$, it follows that $\alpha^f$ and $\beta^f$ are $\cX'$-twins of $\ov\alpha^f$ and $\ov \beta^f$,
respectively. Thus, by Eq. \eqref{a1}, we have
\[
\boldsymbol{r}(\alpha,\beta)^{f}=\boldsymbol{r}(\alpha^f,\beta^f)=\boldsymbol{r}(\ov\alpha^f,\ov\beta^f)
=\boldsymbol{r}(\ov\alpha,\ov\beta)'=\boldsymbol{r}(\alpha,\beta)',
\]
as required.
\end{proof}

\section{Proof of Theorem \ref{theo:main}}\label{sect:proof}
To prove Theorem \ref{theo:main}, we need the following two auxiliary lemmas.

\begin{lemma}\label{lm:dim1}
  The WL-dimension of the class of distance-hereditary graphs is greater than $1$.
\end{lemma}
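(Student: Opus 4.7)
The plan is to exhibit a single distance-hereditary graph whose WL-dimension is at least $2$; since the WL-dimension of a class is the supremum of the WL-dimensions of its members, this suffices. My candidate is $X = 2K_3$, the disjoint union of two triangles (alternatively, the complement $\bar X = K_{3,3}$ works equally well).

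First I would verify that $X$ is distance-hereditary. Every connected induced subgraph of $2K_3$ lies entirely inside one of the two triangle components; inside a clique all pairwise distances are $1$, so distances in any connected induced subgraph coincide with the distances in $X$ itself. Hence $X$ belongs to the class of finite distance-hereditary graphs.

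Next I would apply Theorem~\ref{theo:dim1or2}(1), which characterizes the regular graphs of WL-dimension $1$ as precisely those whose graph or complement is a complete graph, a cocktail party graph, or the $5$-cycle. The graph $X$ is $2$-regular on $6$ vertices, so I would rule out each alternative in turn: $X$ is not complete (it has valency $2$, not $5$), not the cocktail party graph $K_{3\times 2}$ on $6$ vertices (whose valency is $4$), and not $C_5$ (wrong order); symmetrically, $\bar X = K_{3,3}$ is $3$-regular on $6$ vertices and is likewise neither complete, cocktail party, nor a $5$-cycle. Consequently the WL-dimension of $X$ is strictly greater than $1$, and the lemma follows.

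The argument reduces to choosing a suitable example and running a short case analysis against the list in Theorem~\ref{theo:dim1or2}(1), so I do not foresee any real obstacle; the only mild subtlety is to pick a graph that is certifiably distance-hereditary together with its complement lying outside the exceptional families. A sanity check on why $1$-dim WL actually fails on $X$ is that $2K_3$ and $C_6$ produce identical color-refinement stable colorings (both are $2$-regular on $6$ vertices with no further refinement possible), which concretely exhibits the failure of identification.
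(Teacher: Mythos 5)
Your proof is correct and takes essentially the same route as the paper: the paper applies Theorem~\ref{theo:dim1or2}(1) to the regular distance-hereditary graph $K_{n,n}$ with $n>2$, which for $n=3$ is exactly the complement of your $2K_3$, and the case analysis against the exceptional list (complete, cocktail party, $C_5$) is the same. The only difference is your choice of witness and the explicit verification that it is distance-hereditary, which is fine.
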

\begin{proof}
  It follows from Theorem~\ref{theo:dim1or2}(1) that a regular graph $X$ has WL-dimension 1
  if and only if~$X$ or its complement is isomorphic to a complete graph, a cocktail party graph,
  or the 5-cycle. Since~$K_{n,n}$, a complete bipartite graph with parts of size $n$, is
  regular, it has WL-dimension greater than $1$ if $n>2$.
  As $K_{n,n}$ is distance-hereditary by Theorem \ref{theo:DHinduction}, the lemma follows.
\end{proof}

\begin{lemma}\label{lm:indstep}
  Let $X$ be a distance-hereditary graph with at least two vertices, $\pi$ a correct partition of~$X$,
  and $\kk{X}=\wl{X}_{\pi}$. Then $\kk{X}$ has a twin matching or a pendant matching,
  or the twin parabolic~$\eqv$ is nontrivial.
\end{lemma}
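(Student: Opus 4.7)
The plan is to apply Corollary~\ref{coro:DHinduction} and treat its two alternatives separately, the guiding observation being that, by Lemma~\ref{lm:twins}(2), if $e_\cX$ is trivial then every $X$-twin class of $X$ meets each fiber of $\cX$ in at most one vertex: two distinct $X$-twins in a common fiber would automatically be $\cX$-twins. In particular, two distinct twins in the same fiber immediately yield a nontrivial $e_\cX$, so we may assume this ``one-per-fiber'' property holds and aim to produce a matching.

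Suppose first that $X$ has a pendant vertex $\alpha$, and let $\Delta$ be its fiber. Since $E(X)\in\sS(\cX)^\cup$, all vertices of $\Delta$ have degree~$1$ and their unique $X$-neighbors lie in a common fiber $\Delta'$; the unique basis relation $m\subseteq\Delta\times\Delta'$ contained in $E(X)$ therefore satisfies $n_m=1$. If $n_{m^*}>1$, some $y\in\Delta'$ has two distinct pendant $X$-neighbors in $\Delta$, and these are $X$-twins in the same fiber, contradicting the one-per-fiber property. Hence $n_{m^*}=1$ and $m$ is a pendant matching.

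Suppose instead that $X$ has twins $\alpha\ne\beta$ lying in distinct fibers $\Delta_\alpha,\Delta_\beta$, and put $m=\bR(\alpha,\beta)$; the goal is to show $m$ is a twin matching. The engine is an intersection-number property of $m$ derived directly from $\alpha,\beta$ being $X$-twins: for every basis relation $r\subseteq E(X)$ with $r\ne m$, $c^m_{r,s}>0$ forces $s\subseteq E(X)$, while for $r=m$ only the weaker alternative $s=1_{\Delta_\beta}$ or $s\subseteq E(X)$ is available. The exception at $r=m$ arises only in the true-twin subcase $m\subseteq E(X)$, where the point $\gamma=\beta$ itself contributes $s=1_{\Delta_\beta}$; in the false-twin subcase $m\cap E(X)=\varnothing$ it is vacuous. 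From this property one first derives $n_m=1$: a hypothetical $\beta'\in\alpha m\setminus\{\beta\}$ would, via the property and a matching-degree size count, turn out to be an $X$-twin of $\alpha$, putting $\beta,\beta'$ both into $\Delta_\beta$ and contradicting one-per-fiber. Symmetrically $n_{m^*}=1$, so $m$ is a matching. Reapplying the property to an arbitrary $(\delta,\delta')\in m$—with $n_m=1$ now neutralising the $r=m$ exception, since it would force $\gamma=\delta'$, which is excluded—yields $\delta E\setminus\{\delta'\}=\delta' E\setminus\{\delta\}$, so $\delta,\delta'$ are $X$-twins and $m$ is a twin matching.

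The main obstacle is the entanglement in the true-twin subcase: $m$ self-contributes to the intersection number $c^m_{m,1_{\Delta_\beta}}$, blocking the clean implication ``$r\subseteq E(X)\Rightarrow s\subseteq E(X)$'' in full generality. One must therefore prove $n_m=1$ first, using only the partial implication available for $r\ne m$ together with the one-per-fiber observation, and only then propagate the twin property to every pair in $m$.
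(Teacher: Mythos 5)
Your proposal is correct and follows essentially the same route as the paper: the same-fiber twin case is dispatched by Lemma~\ref{lm:twins}(2), and in the remaining cases the constancy of the intersection numbers $c^m_{rs}$ is used to propagate adjacency (and hence the twin property) from the distinguished pair of $m$ to all of its pairs, forcing $n_m=n_{m^*}=1$ on pain of producing two twins in one fiber. The only point you leave unverified is the condition $\Omega_{-}(m)\ne\Omega_{+}(m)$ required by the definition of a pendant matching; in your pendant case this is immediate, since $\Delta=\Delta'$ would make the vertices of $\Delta$ pair up into $K_2$-components, yielding two twins in the same fiber and contradicting your one-per-fiber assumption.
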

\begin{proof}
Let $X=(\Omega,E)$ and
suppose first that there are no twins in $X$. Then $X$ has pendant vertices by Corollary \ref{coro:DHinduction}.
No two of them share the same $X$-neighbor, for otherwise  they are twins in~$X$, a contradiction.
It follows that if $\alpha$ is a pendant vertex and $\beta$ is a unique $X$-neighbor of $\alpha$,
then $m=\boldsymbol{r}(\alpha,\beta)$ is a matching in $\cX$. Moreover, $\Omega_{-}(m)\ne \Omega_{+}(m)$,
for otherwise the vertices $\alpha$ and $\beta$ form a connected component of $X$ and hence are twins.
Thus, $m$ is a pendant matching.

Let $X$ have two distinct twins $\alpha$ and $\beta$. If they belong to the same fiber of~$\cX$, then
the twin parabolic $\eqv$ is nontrivial by Lemma~\ref{lm:twins}(2) and we are done.  Thus, we may assume
that no two distinct twins in $X$ belongs to the same fiber of $\kk{X}$. To complete the proof, it suffices
to verify that the relation $m=\boldsymbol{r}(\alpha,\beta)$ is a (twin) matching. Assume on the contrary
that $m$ or $m^*$ has valency at least~$2$. Without loss of generality, we may assume that
there exists $\beta'\in \alpha m$ other than~$\beta$.

Suppose that there exists an $X$-neighbor $\gamma$ of $\beta$, which is not an $X$-neighbor of~$\beta'$.
Then the relation $r=\boldsymbol{r}(\alpha,\gamma)$ is contained in $E$ (because $\boldsymbol{r}(\beta,\gamma)\subseteq E$
and $\alpha$ and $\beta$ are twins  in $X$), whereas $t=\boldsymbol{r}(\beta',\gamma)$ is not.
On the other hand,
\[
\boldsymbol{r}(\alpha,\beta)=m=\boldsymbol{r}(\alpha,\beta')\subseteq \boldsymbol{r}(\alpha,\gamma)\cdot \boldsymbol{r}(\gamma,\beta')=r\cdot t^*.
\]
It follows that there exists $\gamma'\in\Omega$ such that $(\alpha,\gamma')\in r$ and $(\beta,\gamma')\in t^*$.
Since $r\subseteq E$ and $t\cap E=\varnothing$, this contradicts the fact  that $\alpha$ and $\beta$ are twins  in $X$.
Thus, the point $\gamma$ does not exist and hence $\beta'E\subseteq \beta E$. Since $\beta'$ and $\beta$ lie in the same fiber
of $\cX$ and $E$ is a relation of $\cX$, this inclusion  is the equality.
Consequently, $\beta$ and $\beta'$ are distinct twins  in $X$, lying in the same fiber, a contradiction.
\end{proof}

We are now in a position to prove Theorem \ref{theo:main}.
  Let $X$ be a distance-hereditary graph.
  By Lemma~\ref{lm:dim1}, it suffices to prove that the WL-dimension of $X$ is at most $2$,
  or, equivalently, the coherent configuration $\wl{X}$ is separable (see Theorem~\ref{theo:dim1or2}(2)).
  We shall prove a more general statement that, for a correct partition $\pi$ of $X$,
  the coherent configuration $\wl{X}_{\pi}$ is separable,
  which implies the result by $\wl{X}=\wl{X}_{\pi}$, where $\pi=\fiber{\wl{X}}$.

  We use induction on the number $n$ of vertices of $X$.
  Without loss of generality, we may assume that $n\geq 2$
  and the statement holds for all distance-hereditary graphs with
  at most $n-1$ vertices and their correct partitions.
  By Lemma \ref{lm:indstep}, 
  the coherent configuration $\kk{X}=\wl{X}_{\pi}$ has a twin matching or a pendant matching $m$,
  or the twin parabolic $e:=\eqv$ is nontrivial.

  In the former case, let $\Delta$ denote $\Omega_{-}(m)$.
  By Proposition \ref{prop:matching}, $\pi\setminus \Delta$ is a correct partition of the graph $X\setminus \Delta$.
  Since this graph is distance-hereditary by Lemma \ref{lm:DHinduced},
  the coherent configuration $\wl{X\setminus \Delta}_{\pi\setminus \Delta}$ is separable by induction.
  Thus, $\kk{X}$ is separable by Propositions \ref{prop:matching} and \ref{prop:matching_separability}.

  In the latter case, $\pi/e$ is a correct partition of the quotient graph $X/e$ by Proposition \ref{prop:quot}.
  By Lemmas \ref{lm:quotgraph} and \ref{lm:DHinduced}, $X/e$ is distance-hereditary.
  Hence, by induction, the coherent configuration $\wl{X/e}_{\pi/e}$ is separable.
  Thus, $\kk{X}$ is separable by Propositions \ref{prop:quot} and \ref{prop:quot_separability}.

\Acknowledgements

\end{document}